\documentclass[10pt]{elsarticle}
\journal{ }
\makeatletter
\def\ps@pprintTitle{%
  \let\@oddhead\@empty
  \let\@evenhead\@empty
  \let\@oddfoot\@empty
  \let\@evenfoot\@oddfoot
}
\makeatother
\usepackage[numbers]{natbib}
\usepackage{adjustbox}
\usepackage{graphicx,url}   
\usepackage{xurl}
\usepackage{etoolbox}

\usepackage{setspace} 
\usepackage{geometry} 
\usepackage{eurosym}         
\usepackage{makecell}
   \usepackage{hyperref}
\usepackage{orcidlink}
\usepackage[section]{placeins}
\usepackage{epsfig} 
\usepackage{enumitem} \usepackage{amsmath,amssymb,dsfont}
\usepackage{xcolor}
\usepackage{algorithm}
\usepackage{algpseudocode}
\usepackage{amsthm}
\usepackage{comment}
\usepackage{booktabs}
\usepackage{graphicx}     
\usepackage{caption}      
\usepackage{tabularx} 
\usepackage{subcaption}   
\usepackage{stfloats}
\definecolor{green1}{rgb}{0.0, 0.5, 0.0}
\newcommand{\one}{{\mathbf{1}}}

\newcommand{\tran}{^{\top}}

\newcommand{\E}{{\mathrm E}}

\newcommand{\Pmax}{{P_{\mathrm{max}}^{\mathrm{nt}}}}
\newcommand{\wt}{\widetilde}
\newcommand{\R}{\mathbb{R}}
\newcommand{\N}{\mathbb{N}}

\newcommand{\beq}{\begin{equation}}
\newcommand{\eeq}{\end{equation}}
\newcommand{\bea}{\begin{eqnarray}}
\newcommand{\eea}{\end{eqnarray}}
\newcommand{\beas}{\begin{eqnarray*}}
\newcommand{\eeas}{\end{eqnarray*}}
\newcommand{\ba}{\begin{array}}
\newcommand{\ea}{\end{array}}
\newcommand{\bit}{\begin{itemize}}
\newcommand{\eit}{\end{itemize}}
\newcommand{\ben}{\begin{enumerate}}
\newcommand{\een}{\end{enumerate}}

\newcommand{\ped}[1]{ _{ {\mathrm{#1} } }}

\newcommand{\Real}[1]{ { {\mathbb R}^{#1} } }

\newtheorem{proposition}{Proposition}[section]

\newtheorem{lemma}{Lemma}[section]

\newcommand{\cI}{{\cal I}}

\newcommand{\calT}{{\cal T}}

\newcommand{\calK}{{\cal K}}

\newcommand{\cblue}[1]{\textcolor{blue}{#1}}

\allowdisplaybreaks
\begin{document}
\begin{frontmatter}
\title{Class-Based Smart Charging Control
for Electric Vehicles}
\author[1]{Giuseppe C.~Calafiore}
\author[1]{Luca Ambrosino}
\author[1]{Matteo {Della Rossa}}
\author[2]{Laurent {El Ghaoui}}

\affiliation[1]{Department of Electronics and Telecommunications,
Politecnico di Torino,
Corso Duca degli Abruzzi 24,
10129 Torino, Italy}
\affiliation[2]{College of Engineering and Computer Science and Center of Environmental Intelligence,
VinUniversity,
Hanoi, Vietnam}

\begin{abstract}
This paper proposes a stochastic control framework for the operation of electric-vehicle (EV) charging stations equipped with on-site photovoltaic (PV) generation and battery storage. To maintain scalability for large fleets, vehicles are aggregated into a finite number of classes according to their residual charging demand, which yields a compact state description and avoids vehicle-level optimization. The resulting charging-station dynamics are modeled in discrete time and capture stochastic arrivals, charging-induced class transitions, stochastic departures, PV generation, battery operation, and power exchange with the grid. On the basis of the corresponding expectation model, we formulate a finite-horizon smart-charging problem that jointly optimizes class-wise charging actions and energy-management variables so as to balance minimization of electricity-purchase cost and promotion of transitions to lower residual-demand classes, which is related to customer satisfaction. The stage problem is a linear program, which is solved in shrinking-horizon form and implemented online after integer discretization of the first charging action. We also discuss a robust counterpart that preserves feasibility under interval uncertainty on the first moments of arrivals, departures and PV generation. To validate the framework's scalability and robustness in its intended large-fleet regime, we conduct an extensive simulation campaign across nine distinct configurations, combining three real-world electricity price patterns with three diverse EV-arrival profiles. Across 900 stochastic scenarios, numerical results demonstrate the core advantage of the proposed approach: at comparable but slightly decreased service levels, the controller reduces the cost-per-kWh up to 17.5\% compared to a service-greedy First-In-First-Served (FIFS) baseline; the total daily cost decreases more substantially (ranging from 10\% to nearly 30\% reduction) because the proposed controller also avoids economically unattractive charging. These performances demonstrate a highly favorable, tunable Pareto trade-off between economic efficiency and charging service quality for large-scale charging hubs.
\end{abstract}

\begin{keyword}
Electric Vehicles (EV), Stochastic Modeling, Optimization.
\end{keyword}
\end{frontmatter}

\footnotetext{Emails: $\{$\textit{giuseppe.calafiore; luca.ambrosino; matteo.dellarossa}$\}$\textit{@polito.it},\textit{laurent.eg@vinuni.edu.vn}. \\   This document is the results of the research
   project funded by the FAIR - Future Artificial Intelligence Research, with funding from the European Union Next-Generation EU (Piano Nazionale di Ripresa e Resilienza (PNRR) -- Missione 4 Componente 2, Investimento 1.3 -- D.D. 1555 11/10/2022, PE00000013). This manuscript reflects only the authors' views and opinions, neither the European Union nor the European Commission can be considered responsible for them.}
\section{Introduction}

The rapid diffusion of electric vehicles is increasing the operational complexity of charging infrastructure. In large public or workplace charging stations, the charging operator must jointly manage limited grid capacity, uncertain vehicle arrivals and departures, time-varying electricity prices, and the intermittency of on-site renewable generation. When PV production and stationary storage are available, the scheduling problem becomes even richer: local renewable energy can be self-consumed, stored for later use, or replaced by grid energy depending on the prevailing operating conditions. Recent reviews confirm that uncertainty handling, scalability, and the integration of market signals with local energy resources remain central challenges in EV charging control~\cite{Li2024Review,Motlagh2025Review}.

A first large body of literature formulates the charge allocation problem through optimization models, often at the level of individual vehicles. Examples include mixed-integer linear programming (MILP)-based formulations for urban energy systems with EVs, PV, and storage~\cite{Bracco2019,SUN2021125564,BARTOLUCCI2023135426,Nijenhuis25}. These approaches can be very accurate, but they tend to become computationally demanding when the fleet is large or when control decisions must be updated frequently. A second research direction uses optimal-control and model-predictive-control (MPC) techniques to address online operation and uncertainty. Recent contributions include MPC-based EV charging for demand-side management~\cite{Fernandez2024}, adaptive MPC schemes for distribution networks with EV flexibility~\cite{Yang2022AdaptiveMPC}, distributed MPC for multiple charging stations~\cite{Zheng2019OnlineMPC}, and stochastic or uncertainty-aware scheduling strategies for renewable-integrated charging~\cite{CABRERA22,Casini2021PeakMin,Hermans2024, Dutta22}. These works demonstrate the relevance of receding-horizon control, but many of them either rely on vehicle-level models, require detailed information on each charging session, or operate on aggregate daily batches rather than on explicit arrival--charge--departure dynamics.

Our focus in this paper is on charging stations serving many vehicles under high-frequency uncertainty, where one would like to retain a dynamic model rich enough to represent arrivals, charging progress, departures, PV generation, and storage, while keeping the optimization problem tractable. To this end, we build on the idea of aggregating vehicles into classes, or cohorts, defined by their residual charging demand. This cohort-type representation was recently explored in a simpler deterministic setting in~\cite{Calafiore2025_cohort}. Here, we extend such approach along several directions: we consider a full realistic stochastic model for arrivals/departures dynamics, 
we introduce photovoltaic on-site generation and storage dynamics, and we develop a linear programming decision model on a fixed horizon that is robust to statistical model ambiguity. Further, we deploy  such model in a dynamic shrinking-horizon fashion, and present an extensive numerical validation study.

The baseline approach typically implemented in practice for charge management is a \emph{First-In-First-Served} (FIFS) rule.
Under FIFS, charging capacity is allocated as soon as possible to vehicles, according to their order of arrival, without considering variations in electricity prices or the economic value of delaying charging. As a result, a FIFS approach is inherently not cost-aware and represents a service-greedy, price-agnostic extreme: it blindly uses available charging capacity and allocates it according to arrival priority, with no attention to the cost of service.
The  approach proposed here introduces instead controllable flexibility, allowing the operator to move away from this fairness/service extreme in exchange for lower and optimized energy cost.
Our proposed charge controller thus provides a practical way to move along the cost–satisfaction Pareto frontier, relaxing the immediate-service priority of FIFS when doing so creates economically valuable charging flexibility.

\noindent
In detail, the main contributions of the paper are the following.
\begin{itemize}[leftmargin=*]
\item We introduce a class-based state representation in which vehicles are grouped according to the number of charging intervals still required to fulfill their energy demand. This yields a compact model whose dimension depends on the number of classes, not on the number of parked vehicles, and is therefore suitable for large fleets.
\item We develop a discrete-time stochastic model for the coupled arrival--charge--departure process. Vehicle arrivals are modeled through class-wise stochastic arrivals, while departures are represented by class-dependent Bernoulli  mechanisms. The resulting model is coupled with a battery/PV/grid energy-management layer, and we establish basic well-posedness properties such as positivity and integer-valuedness of the stochastic state evolution.
\item We formulate a finite-horizon smart-charging problem that jointly optimizes the class-wise charging decisions and the energy-management variables. The resulting expectation based stage problem is a linear program (LP), which is then deployed in an online shrinking-horizon fashion: at each step, we solve an expected-value optimization problem over the residual planning horizon and, following a discretization/refinement procedure, apply the first control action. Then the horizon is moved one-step forward and the process is repeated.
\item We formally introduce the robust counterpart of our approach, under interval ambiguity on (first moments) of the future parameters. We formally show how feasibility is preserved without leaving the LP framework.
\end{itemize}

An extensive numerical validation campaign is carried out by
considering the combination of three real regional price patterns
with three realistic EV-arrival profiles (hence a grid of nine price/arrivals configurations), and for each configuration we performed dynamic simulations for 100 stochastic scenarios. These experiments highlight the stochastic variability induced by uncertain arrivals, departures, and PV production. The results confirm that the proposed strategy yields substantial economic benefits over FIFS: the saving per delivered kWh is up to 17\% in favorable scenarios, while the total cost-saving ranges from approximately 10\% to 30\%. These gains are achieved at the price of only a very small reduction in service level, confirming a favorable trade-off between operating cost and charging quality. Finally, these results are complemented by an ablation study on a representative configuration, a robust approach to forecast uncertainty, and a parametric sensitivity analysis. 

The paper is organized as follows. Section~\ref{sec:Model} introduces the stochastic charging-station model and the associated expectation dynamics. Section~\ref{Sec:SmartChargingControlProblem} formulates the finite-horizon smart-charging problem and the shrinking-horizon implementation. Section~\ref{sec:uncrob} discusses model ambiguity and a worst-case LP formulation. Section~\ref{sec:NumericalSimulation} presents the numerical study, including a comparison with the batch-MPC benchmark of~\cite{Hermans2024} and  the results of a large-fleet extensive validation campaign over nine price/arrival configurations, in Section~\ref{sec:validation_campaign}. 
An-in depth analysis on a specific one-day scenario is also offered in Section~\ref{sec:one_day_scenario}. Further, a  robust approach example is given in Section~\ref{subsec:robustness_simulations}, and a parametric sensitivity analysis is reported in Section~\ref{sec:sensitivity_analysis}.
Section~\ref{sec:Conclusions} concludes the paper.
\vspace{.2cm}
\textbf{Notation.} We denote by $\N$ the set of nonnegative integers and by $\R$ the set of real numbers. The sets of nonnegative and strictly positive reals are denoted by $\R_{\ge 0}$ and $\R_{>0}$, respectively. Vector and matrix inequalities are understood component-wise. For $k\in\N$, $I_k$ denotes the identity matrix in $\R^{k\times k}$ and $\one\in\R^k$ denotes the all-ones vector.

\section{The model}\label{sec:Model}
In this section we introduce the key  dynamic stochastic model of the
problem under consideration.

\subsection{Variable definitions and assumptions}
We consider a bounded charging station or parking lot where EVs can enter, stay parked while charging or waiting, and depart. Time is discretized with sampling interval $\Delta>0$ and index $t\in\N$; the $t$-th interval is
\[
\cI_t \doteq [t\Delta,(t+1)\Delta).
\]
We assume that arrivals and departures occur only at the boundaries of the sampling intervals.
Each charging socket has the same nominal installed power $P^0>0$, and every vehicle that is selected for charging during $\cI_t$ receives power $P^0$ throughout that interval. A newly arrived vehicle declares an energy request $E$, and we convert it into the number of charging intervals still required by that vehicle as
\[
i=\min\!\left\{\left\lceil \frac{E}{\Delta P^0}\right\rceil,n\right\},
\]
where $n\in\N$ is the maximum admissible number of charging intervals. The integer $i\in\{0,\ldots,n\}$ defines the \emph{class} of the vehicle: class $0$ corresponds to fully charged vehicles, whereas larger values of $i$ correspond to larger residual charging requirements.
For each class $i\in\{0,\ldots,n\}$ and time $t\in\N$ we denote by:
\begin{itemize}[leftmargin=*]
\item $x_i(t)$ the number of class-$i$ vehicles present in the station during $\cI_t$;
\item $c_i(t)$ the number of class-$i$ vehicles that are in charge during $\cI_t$;
\item $a_i(t)$ the number of class-$i$ vehicles that arrive during $\cI_t$; these vehicles will enter the lot only at time $(t+1)\Delta$ and they may thus start their charging cycles from the time-interval $\cI_{t+1}$ onward.
\item $d_i(t)$ the number of class-$i$ vehicles that depart at the end of $\cI_t$; they will actually be outside the lot only at time $(t+1)\Delta$, they may or may not be in charge during the interval $\cI_t$.
\end{itemize}
By definition, all these quantities are integer and nonnegative, and the charging input must satisfy
\[
0\le c_i(t)\le x_i(t),\qquad \forall i\in\{0,\ldots,n\},\ \forall t\in\N.
\]
For convenience we also define the number of non-charging vehicles in class $i$ during $\cI_t$ as
\[
I_i(t)\doteq x_i(t)-c_i(t).
\]

\subsection{The arrival-charge-departures dynamics}\label{subsec_arrival_chargeModel}
We define the aggregate state vector
\[
x(t)\doteq \left[\ba{c}x_0(t)\\ x_1(t)\\ \vdots \\ x_n(t)\ea\right]\in\N^{n+1},
\]
and the arrival and charging vectors
\[
a(t)\doteq [a_0(t),\ldots,a_n(t)]\tran,\;\;c(t)\doteq [c_0(t),\ldots,c_n(t)]\tran.
\]
We adopt the conventions $c_0(t)=0$ and $c_{n+1}(t)=0$ for all~$t$. Define the number of vehicles that remain available for departure after the charging decision by
\[
h_i(t)\doteq x_i(t)-c_i(t)+c_{i+1}(t),\qquad i=0,\ldots,n.
\]
The quantity $h_i(t)$ counts the vehicles that belong to class $i$ at the end of the charging phase during interval $\cI_t$: vehicles already in class $i$ that were not charged remain there, while charged vehicles from class $i+1$ move into class $i$.
The class dynamics are therefore
\begin{equation}
\begin{aligned}
x_i(t+1) &= h_i(t)-d_i(t)+a_i(t)
=x_i(t)-c_i(t)+c_{i+1}(t)-d_i(t)+a_i(t),
\end{aligned}
\label{eq:model1}
\end{equation}
for all $i=0,\ldots,n$.\\
Departures are modeled as a binomial thinning of $h_i(t)$. Let
$\{\mathcal F_t\}_{t\in\mathbb N}$ denote the filtration representing the
information available up to time $t$. After the charging action $c(t)$ has been
selected, $h_i(t)$ is $\mathcal F_t$-measurable, and we assume
\[
d_i(t)\,\big|\,\mathcal F_t \sim \mathrm{Bin}\bigl(h_i(t),\bar\alpha_i(t)\bigr),
\qquad i=0,\ldots,n,
\]
where $\bar\alpha_i(t)\in[0,1]$ is the nominal conditional departure probability
for class $i$ during interval $\mathcal I_t$. Hence,
\[
\mathbb E\!\left[d_i(t)\mid \mathcal F_t\right]=h_i(t)\,\bar\alpha_i(t).
\]
If $h_i(t)>0$, define the realized departure fraction 
$
\alpha_i(t)\doteq {d_i(t)}/{h_i(t)}
$, while for $h_i(t)=0$, set $\alpha_i(t)=0$. Then
\[
\alpha_i(t)\in
\left\{0,\frac{1}{h_i(t)},\ldots,\frac{h_i(t)-1}{h_i(t)},1\right\},
\]
so $\alpha_i(t)$ is discrete-valued. By contrast, $\bar\alpha_i(t)$ is not a
realization but the corresponding conditional mean; indeed, whenever
$h_i(t)>0$,
$
\mathbb E\!\left[\alpha_i(t)\mid \mathcal F_t\right]=\bar\alpha_i(t)
$.
Accordingly, the stochastic state update is
\begin{equation}
x(t+1)=A(t)\,[x(t)+Bc(t)]+a(t),
\label{eq_model0}
\end{equation}
where
\[
A(t)\doteq I_{n+1}-\mathrm{diag}(\alpha(t)),
\;\;
\alpha(t)=[\alpha_0(t),\ldots,\alpha_n(t)]^\top,
\]
\[
\qquad B \doteq
\left[\ba{ccccc}
-1 & 1 & 0 & \cdots & 0 \\
0 & -1 & 1 & \cdots & 0 \\
\vdots & & \ddots & \ddots & \vdots \\
0 & \cdots & 0 & -1 & 1 \\
0 & \cdots & \cdots & 0 & -1
\ea\right].
\]
For prediction, we use the conditional-mean model with respect to the state of
knowledge at the decision time $\tau$. Denoting
$
\mathbb E_\tau[\cdot]\doteq \mathbb E[\cdot\mid \mathcal F_\tau]
$,
the expected dynamics are
\begin{equation}
\bar x(t+1)=\bar A(t)\,[\bar x(t)+Bc(t)]+\bar a(t),\qquad t\ge \tau,
\label{eq_model0_E}
\end{equation}
where
\[
\begin{aligned}
\bar x(t)&\doteq \mathbb E_\tau[x(t)],\quad
\bar a(t)\doteq \mathbb E_\tau[a(t)],\quad
\bar A(t)&\doteq I_{n+1}-\mathrm{diag}(\bar\alpha(t)),
\end{aligned}
\]
with
$
\bar\alpha(t)\doteq
[\bar\alpha_0(t),\ldots,\bar\alpha_n(t)]^\top
$.

\begin{lemma}\label{lemma:Preliminary}
For any $t\in\N$, let $\bar a(t)\in\R_{\ge 0}^{n+1}$ and $\bar\alpha(t)\in[0,1]^{n+1}$. Consider any initial condition $\bar x_0\in\R_{\ge 0}^{n+1}$ and any input sequence $c:\N\to\R^{n+1}$ such that
\[
0\le c(t)\le \bar x(t),\qquad \forall t\in\N.
\]
Then the solution of~\eqref{eq_model0_E} with $\bar x(0)=\bar x_0$ satisfies $\bar x(t)\ge 0$ for all $t\in\N$. The system is also monotonic in $x$, i.e., if $y\geq x\geq c(t)$, then
\begin{equation}\label{eq:monoticity}
 \bar A(t)\,[y+Bc(t)]+\bar a(t)\geq \bar A(t)\,[ x+Bc(t)]+\bar a(t),
\end{equation}
for all $t\in \N$.
Moreover, let $a:\N\to\N^{n+1}$ be any integer-valued arrival sample path, let the departures satisfy the binomial model above, and let $x_0\in\N^{n+1}$. If $c:\N\to\N^{n+1}$ satisfies
\[
0\le c(t)\le x(t),\qquad \forall t\in\N,
\]
then the solution of~\eqref{eq_model0} with $x(0)=x_0$ satisfies $x(t)\in\N^{n+1}$ for all $t\in\N$.
\end{lemma}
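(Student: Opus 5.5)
The plan is to work componentwise, starting from the scalar form of the expected dynamics. Writing \eqref{eq_model0_E} row by row, with the conventions $c_0(t)=0$ and $c_{n+1}(t)=0$, gives
\[
\bar x_i(t+1)=(1-\bar\alpha_i(t))\bigl[\bar x_i(t)-c_i(t)+c_{i+1}(t)\bigr]+\bar a_i(t),\qquad i=0,\ldots,n,
\]
since $Bc$ has $i$-th entry $-c_i+c_{i+1}$. Nonnegativity then follows by induction on $t$. The base case is $\bar x(0)=\bar x_0\ge 0$. For the inductive step, assume $\bar x(t)\ge 0$ and $0\le c(t)\le \bar x(t)$. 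Then the bracket $\bar x_i(t)-c_i(t)+c_{i+1}(t)$ is a sum of the two nonnegative terms $\bar x_i(t)-c_i(t)$ and $c_{i+1}(t)$. This is the only place the constraint $c\le \bar x$ is really needed. Multiplying by $1-\bar\alpha_i(t)\in[0,1]$ and adding $\bar a_i(t)\ge 0$ keeps the result nonnegative.

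Monotonicity \eqref{eq:monoticity} is immediate from linearity. The difference between the two sides is $\bar A(t)(y-x)$. Since $\bar A(t)$ is diagonal with entries $1-\bar\alpha_i(t)\ge 0$, it is a nonnegative matrix, so $\bar A(t)(y-x)\ge 0$ whenever $y\ge x$. The hypothesis $x\ge c(t)$ is not needed for the inequality itself. It only ensures that both trajectories are admissible.

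For the stochastic integer claim I again plan an induction on $t$, this time along a sample path. Assume $x(t)\in\N^{n+1}$ and $c(t)\in\N^{n+1}$ with $c(t)\le x(t)$. Then $h_i(t)=x_i(t)-c_i(t)+c_{i+1}(t)$ is a nonnegative integer, by the same splitting argument as above.

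The one subtle step is to rewrite $A(t)[x(t)+Bc(t)]$ as $h(t)-d(t)$. If $h_i(t)>0$, then $(1-\alpha_i(t))h_i(t)=h_i(t)-d_i(t)$ by the definition of $\alpha_i(t)$. If $h_i(t)=0$, then $\alpha_i(t)=0$ and the term is $0$; and $d_i(t)\sim\mathrm{Bin}(0,\cdot)$ equals $0$, so again the term equals $h_i(t)-d_i(t)$. Because $d_i(t)$ is binomial with $h_i(t)$ trials, it is an integer in $\{0,\ldots,h_i(t)\}$. Hence $h_i(t)-d_i(t)\in\N$, and adding the integer arrival $a_i(t)\in\N$ gives $x_i(t+1)\in\N$.

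The main care point is this consistency check between the fraction $\alpha_i(t)$ and the integer count $d_i(t)$, including the degenerate case $h_i(t)=0$. The rest is routine.
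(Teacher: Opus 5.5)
Your proof is correct and follows the paper's argument: induction on $t$ using the componentwise nonnegativity of $\bar x_i(t)-c_i(t)+c_{i+1}(t)$, monotonicity from $\bar A(t)\ge 0$, and integrality via $x_i(t+1)=h_i(t)-d_i(t)+a_i(t)$ with $d_i(t)\in\{0,\ldots,h_i(t)\}$. The only difference is that you explicitly check $[A(t)(x(t)+Bc(t))]_i=h_i(t)-d_i(t)$, including the case $h_i(t)=0$; the paper skips this check by working directly with the scalar recursion~\eqref{eq:model1}.
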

\begin{proof}
For the expectation model, the vector $\bar x(t)+Bc(t)$ is component-wise nonnegative because
\[
[\bar x(t)+Bc(t)]_i=\bar x_i(t)-c_i(t)+c_{i+1}(t)\ge 0,
\; i=0,\ldots,n,
\]
by the constraint $0\le c(t)\le \bar x(t)$. Since $\bar A(t)$ has nonnegative diagonal entries and $\bar a(t)\ge 0$, recursion~\eqref{eq_model0_E} implies $\bar x(t+1)\ge 0$ whenever $\bar x(t)\ge 0$; the claim follows by induction.
The claimed monotonicity follows by the fact that $\bar A(t)\geq 0$, for all $t\in \N$.
For the stochastic model, define $h_i(t)=x_i(t)-c_i(t)+c_{i+1}(t)$. If $x(t)\in\N^{n+1}$ and $0\le c(t)\le x(t)$, then $h_i(t)\in\N$ for every $i$. By construction, $d_i(t)\in\{0,1,\ldots,h_i(t)\}$ is integer-valued, hence
\[
x_i(t+1)=h_i(t)-d_i(t)+a_i(t)\in\N.
\]
Because $0\le d_i(t)\le h_i(t)$ and $a_i(t)\ge 0$, we also have $x_i(t+1)\ge 0$. The result follows by induction on $t$.
\end{proof}
In Lemma~\ref{lemma:Preliminary} we established that the arrival--charging--departure model, and its expectation counterpart, are well posed and positive. We next introduce the 
model governing energy production, storage, and release, which will be coupled with model (\ref{eq_model0_E}).

\subsection{Dynamics of energy storage and photovoltaic generation}
We now introduce the energy-management layer associated with the charging station as an  aggregate energy-balance model. Let $s(t)$ denote the energy stored in the local battery during interval $\cI_t$. Its evolution is modeled by
\begin{equation}\label{eq_StorageDynamic}
s(t+1)=s(t)-q(t)\Delta,
\end{equation}
where $q(t)$ is the  battery power exchanged during $\cI_t$: $q(t)>0$ means that energy is withdrawn from storage, while $q(t)<0$ means that energy enters the battery, i.e., that the battery is being charged. The storage state is constrained by
\[
0\le s(t)\le s\ped{max},\qquad \forall t\in\N,
\]
where $s\ped{max}$ is the battery capacity, and the exchange power also satisfies a upper-bound constraint on magnitude:
\[
|q(t)|\le q\ped{max}.
\]
The total charging power required by the EVs during interval $\cI_t$ is
\[
P_c(t)=P^0\sum_{i=0}^n c_i(t)=P^0\one\tran c(t).
\]
This demand is supplied by two controllable power flows: power purchased from the grid, denoted by $p\ped{ntw}(t)$, and power coming from the PV-plus-storage subsystem, denoted by $p\ped{phq}(t)$. Hence,
\[
P_c(t)=p\ped{ntw}(t)+p\ped{phq}(t).
\]
We impose the operational bounds
\beq
0\le p\ped{ntw}(t)\le \Pmax,
\quad
0\le p\ped{phq}(t)\le P\ped{ph}(t)+q(t),
\label{eq:power_budget_ineq}
\eeq
where $\Pmax$ is the contractual power available from the distribution network and $P\ped{ph}(t)\ge 0$ is the PV power available during $\cI_t$. The second inequality expresses the fact that the power injected into the EV chargers from the local energy subsystem cannot exceed the instantaneous PV production plus the battery discharge.

The total power available to the charging station is therefore
\[
P(t)=\Pmax+P\ped{ph}(t)+q(t),
\]
and the corresponding energy quantities over interval $\cI_t$ are obtained by multiplying by $\Delta$; in particular,
\[
E\ped{ntw}(t)=\Delta p\ped{ntw}(t),\qquad E\ped{phq}(t)=\Delta p\ped{phq}(t).
\]
The stationary battery may be charged only from surplus PV generation: grid-imported power can serve EV charging demand, but it is not allowed to charge the battery.

\subsection{Distributions of random quantities}\label{subsec:random_quant}
The proposed controller uses an expectation model, hence it requires forecasts of EVs arrivals, departures, and PV production. The specific stochastic descriptions used for the simulations are the following.
\begin{itemize}[leftmargin=*]

\item \textbf{Vehicle arrivals.} The total number of arrivals is modeled by a non-homogeneous Poisson process with time-varying intensity $\lambda(t)$, a standard choice for EV-arrival modeling in data-driven settings~\cite{AmaraOuali2023NHPP,lee_acndata_2019}. Here $\lambda(t)$ denotes the expected number of arrivals during interval $\cI_t$; if one starts from an hourly arrival-rate profile, the corresponding per-step mean is obtained by multiplying by $\Delta$. Conditioned on the total number of arrivals, vehicles are assigned to classes according to given probabilities $p_i$, with $\sum_{i=0}^n p_i=1$. Hence,
\begin{equation}
\bar a_i(t)=\E\{a_i(t)\}=p_i\,\lambda(t),
\label{eq:arrival_expectation}
\end{equation}
which is generally non-integer.

\item \textbf{Departure matrix used in the deterministic LP.} Given the nominal departure probabilities $\bar\alpha_i(t)$, the deterministic prediction matrix is
\begin{equation}
\bar A(t)=I_{n+1}-\mathrm{diag}(\bar\alpha(t)).
\label{eq:matrixA_expectation_random}
\end{equation}

\item \textbf{Photovoltaic production.} We assume that a nominal clear-sky PV profile $\hat P\ped{ph}(t)$ is available and that actual PV production is obtained by a downward random perturbation,
\[
P\ped{ph}(t)=\hat P\ped{ph}(t)-\xi_t,
\qquad
\xi_t\sim \mathcal U\bigl(0,w\hat P\ped{ph}(t)\bigr),
\]
with $w\in(0,1)$. Therefore,
\begin{equation}
\bar P\ped{ph}(t)=\E\{P\ped{ph}(t)\}=\left(1-\frac{w}{2}\right)\hat P\ped{ph}(t),
\label{eq:PV_production_expectation}
\end{equation}
which provides a conservative expected PV profile.

\end{itemize}

\section{The smart charging control problem}\label{Sec:SmartChargingControlProblem}

The control objective we consider in the smart charging problem is a mixed criterion which accounts for energy costs (to be minimized) and customer satisfaction (to be maximized).
We focus on a \emph{finite-horizon} setup, considering an overall time-span of $[0,\Delta T]$, where $T\in \N$ is the label of the final time interval under consideration, for instance the end of a 24-hour operation period.
In what follows, $\pi_t\geq 0$ denotes the price rate for energy $E\ped{ntw}(t)=\Delta p\ped{ntw}(t)$ drawn from the power network during the  $t$th interval. Electricity prices are assumed to be deterministic because the model operates under a day-ahead market framework, similar to other works \cite{Xu2024}. In this setting, hourly electricity prices for the next day are cleared and published on the day before delivery \cite{ElectricityMarket}, so they are known in advance when the optimization problem is solved. 
The tariff signal $\pi_t$ is therefore  a deterministic input for the model.  Assuming no cost for the self-produced photovoltaic energy, we have that the overall energy economic cost over the considered horizon is
\[
\Upsilon_{0:T-1} = \sum_{t=0}^{T-1}\pi_t \Delta p\ped{ntw}(t).
\]
As a proxy for describing customer satisfaction, we 
use a term that promotes the shifting of vehicles towards lower classes, i.e., classes with lower residual energy needs.
In particular, 
given $\beta_0 \geq  \beta_1 \geq  \beta_2 \geq \cdots\geq  \beta_n\geq 0$ we define
$
\beta\doteq [\beta_0,\ldots,\beta_n]\tran
$ as the \emph{satisfaction weights vector},
and the overall satisfaction of customers is assumed to be expressed by
\begin{equation}\label{eq:Satisfaction}
S_{0:T} \doteq \E \left\{\sum_{t=0}^{T}\beta \tran x(t)\right\}=\sum_{t=0}^{T}\beta \tran \bar x(t)
.
\end{equation}
Such term can be interpreted as a class-weighted occupancy indicator, where lower-class vehicles contribute more strongly, consistent with the assumption that $\beta$ is non-increasing in $i$. From a modeling perspective, this can be regarded as a linear satisfaction measure, introduced to balance the economic-cost term $\Upsilon_{0:T-1}$  defined above. 
Intuitively, a larger value of this term incentivizes a greater number of vehicles to transition to lower residual-demand classes, which is accomplished by applying a more aggressive charging policy.
Many design choices of $\beta$ are possible; in our simulations, we take 
$\beta_i = \rho^i$, with $\rho < 1$ appropriately chosen, 
which formalizes the notion that satisfaction decreases exponentially as the state of charge of the vehicle decreases.

The control objective is naturally a multi-criterion one, since one would like on the one hand to minimize charging costs, and on the other hand to maximize customer satisfaction, and these two goals are naturally conflicting. We here take a classical scalarization approach to the multi-criterion problem, by defining
 a mixed
criterion to be minimized as
\begin{equation}\label{eq:CostFUnctional}
J_{0:T} \doteq    \Upsilon_{0:T-1} - \gamma S_{0:T},
\end{equation}
where $\gamma\geq 0$ is some tunable tradeoff parameter.
The control decisions $(c(t),q(t))$ are given at each instant by a suitable policy $\Theta_t$
\[
(c(t),q(t)) = \Theta_t(\calK_t),
\]
where $\calK_t$ represents the information available at time $t\in \{0,\dots,T-1\}$, which includes the previous  values of the state $x(0),\ldots,x(t)$,  the previous values of the storage $s(0),\ldots, s(t)$, and charging decisions, arrivals, departures, photovoltaic production  at times $0,\dots, t-1$.
For given initial conditions $x(0)\in \N^{n+1}$ and $s(0)=s_0\in [0,s\ped{max}]$, 
the smart charging control problem would amount to minimizing over the policies $\Theta_0,\ldots,\Theta_{T-1}$ the expected cost $J_{0:T}$, while satisfying the system dynamics and  constraints (in expectation). 
This stochastic dynamic-programming formulation is, however, computationally intractable in general because the policies are infinite-dimensional objects.

We hence here consider a classical approximate solution approach 
inspired by model predictive control (MPC), in which we solve repeatedly the control problem in open loop at each stage $\tau=0,\ldots,T-1$, with fixed decisions, over a receding and shrinking horizon, and take as effective control decision the first element from the result of each stage. We formally describe the procedure in the following subsection.\\

\subsection{Receding-horizon control procedure}\label{subsec:RecedingHorizonProcedure}
In this subsection, we formally describe the implementation of our control policy, which aims to minimize the overall cost defined in~\eqref{eq:CostFUnctional} within a receding-horizon iterative framework.

To this end, we introduce a stage (or timer) variable, denoted by $\tau \in \{0, \ldots, T-1\}$.
At each stage $\tau$ the current integer-valued state $x(\tau)$ and storage $s(\tau)$ are available together with the current photovoltaic production $P\ped{ph}(\tau)$, and we thus use them, as well as the other information $\calK_\tau$, to update
the forward expected  arrivals and departures rates $\bar a(t)$ and $\bar \alpha(t)$ for $t\geq  \tau$, and expected photovoltaic production $\bar P\ped{ph}(t)$ for $t>\tau$.  
For notational convenience, in what follows, we suppose that $\bar P\ped{ph}(\tau)= P\ped{ph}(\tau)$, since, as said, the current photovoltaic production is known.
We then organize the problem variables in matrix/vector format as:
\[
\begin{aligned}
C_\tau 
&\doteq [c(\tau)\,\cdots\,c(T-1)]\in\Real{n+1,T-\tau},
\end{aligned}
\]
\[
\begin{aligned}
X_\tau 
&\doteq [\bar x(\tau)\,\cdots\,\bar x(T)]\in\Real{n+1,T+1-\tau}, \\
Q_\tau 
&\doteq  [q(\tau)\,\cdots\,q(T-1)]\in\Real{1,T-\tau},\\
S_\tau 
&\doteq  [\bar s(\tau)\,\cdots\,\bar s(T)]\in\Real{1,T+1-\tau}, \\
P_{\text{ntw},\tau} 
&\doteq  [p_{\text{ntw}}(\tau)\,\cdots\,p_{\text{ntw}}(T-1)]\in\Real{1,T-\tau},\\
P_{\text{phq},\tau} 
&\doteq  [p_{\text{phq}}(\tau)\,\cdots\,p_{\text{phq}}(T-1)]\in\Real{1,T-\tau},
\end{aligned}
\]
 resulting in a total of $(2n+6)(T-\tau)+n+2$ variables. Then,  the \emph{stage-$\tau$ smart charging problem} is cast in the form of the following \emph{linear program}:
\bea
\label{eq:mainLPmodel}
&\bar p_\tau =&\min_{C_\tau,X_\tau,Q_\tau,S_\tau,P\ped{ntw,\tau},P\ped{phq,\tau}}    \sum_{t=\tau}^{T-1}\pi_t \Delta p\ped{ntw}(t)\\&\;\;\;&\;\;\; - \gamma \sum_{t=\tau}^{T}\beta \tran \bar x(t) \nonumber \\
& \mbox{s.t.:} & 
P^0 \one\tran c(t)  = p\ped{ntw}(t) + p\ped{phq}(t) ,\quad \forall \,t\in \calT_\tau, \nonumber \\
&& 0\leq p\ped{ntw}(t) \leq  \Pmax , \quad\forall\, t\in \calT_\tau, \nonumber \\
&& 0\leq p\ped{phq}(t)\leq \bar P\ped{ph}(t) + q(t),\quad \forall \,t\in \calT_\tau, \nonumber \\
 && 0\leq c(t) \leq \bar x(t),\quad \forall\, t\in \calT_\tau , \nonumber\\
  &&  c_0(t)=0,\quad \forall\, t\in \calT_\tau , \nonumber\\
 && \bar x(t+1) = \bar A(t)\left [\bar x(t) +B c(t)\right] + \bar a(t), \;\;\forall \,t\in \calT_\tau, \nonumber \\
  &&    \bar x(\tau)=x(\tau), \nonumber\\
 && \bar s(t+1) = \bar s(t) - q(t) \Delta, \quad\forall\; t\in \calT_\tau  , \\
 && \bar s(\tau)=s(\tau), \nonumber\\
&&   0\leq \bar s(t+1)\leq s\ped{max},\quad \forall t\in \calT_\tau, \nonumber\\
&&  |q(t)| \leq q\ped{max}, \quad \forall\;t\in \calT_\tau,\nonumber
\eea
where $\calT_\tau \doteq \{\tau,\ldots,T-1\}$.
Note that the final positivity constraint $\bar x(T)\geq 0$ is not explicitly stated, since it is trivially satisfied, as proven in Lemma~\ref{lemma:Preliminary}.

Let us denote an optimal solution of the above optimization problem \eqref{eq:mainLPmodel}  by $C^\star_\tau,X^\star_\tau$, $Q^\star_\tau,S^\star_\tau$, $P^\star\ped{ntw,\tau},P^\star\ped{phq,\tau}$. In particular, this provides a real-valued optimal charging schedule $c^\star(t)$ for $t\in \{\tau, \ldots, T-1\}$ for the vehicles classes, as well as
the optimal storage plan $q^\star(t)$, for $t\in \{\tau,\ldots,T-1\}$. 
The current optimal charging policy $c^\star(\tau)$, however, is not exactly implementable in practice, since it is real valued; we further know that $0 \leq c^\star(\tau) \leq \bar x(\tau)=x(\tau)$  because of the constraints in problem \eqref{eq:mainLPmodel}. Hence, we discretize $c^\star(\tau)$ into an integer-valued solution by flooring  each of its entries.
 More formally, we define such integer vector by
\begin{equation}\label{eq:integerPolicy}
\wt c(\tau)\doteq\lfloor c^\star(\tau) \rfloor\in \N^{n+1}
\end{equation}
where $\lfloor\cdot \rfloor$ denotes the component-wise floor function.

%
To satisfy the equality constraint 
\[
P^0 \one\tran \wt c(\tau)  = p\ped{ntw}(\tau) + p\ped{phq}(\tau)
\]
we possibly need to decrease the value of $p^\star\ped{ntw}(\tau)$ (and possibly of $p^\star\ped{phq}(\tau)$). Operationally, one may first reduce grid power and, only if needed, reduce the local-supply setpoint consistently with the corresponding storage usage. 
More formally, we set

\begin{subequations}\label{eq:DecreasedEnergy}
  \begin{equation}  \label{eq:PntwMigliorato}
   \wt p\ped{ntw}(\tau)\doteq\max\left \{ P^0 \one\tran \wt c(\tau)  -  p^\star\ped{phq}(\tau),\, 0\right \},
   \end{equation}
   \begin{equation}\label{eq:PphqMigliorato}
     \wt p\ped{phq}(\tau)\doteq P^0 \one\tran \wt c(\tau)  -  \wt p\ped{ntw}(\tau),
   \end{equation}
\end{subequations}
Similarly, due to the flooring of the charging decision variable $c(t)$ as defined in~\eqref{eq:integerPolicy}, we might be able to use less energy from the storage systems. Thus, more formally, we can define
\begin{equation}\label{eq:DedreasedCharge}
\begin{aligned}
\wt q(\tau) \doteq \max \Big\{ 
    \min \Big\{ q^\star(\tau),\, \wt p\ped{phq}(\tau) - \bar P\ped{ph}(\tau) \Big\}, 
    -q\ped{max},\; \frac{\bar s(\tau) - s\ped{max}}{\Delta}
\Big\}.
\end{aligned}
\end{equation}
Following this ``reassignment'' of such decision variables (as performed in~\eqref{eq:integerPolicy},~\eqref{eq:DecreasedEnergy},~\eqref{eq:DedreasedCharge}), it can be proven that  such refined solution satisfies the inequalities at time \( \tau \), and enables the computation of the subsequent initial conditions, namely \( x(\tau+1) \) and \( s(\tau+1) \), in compliance with the constraints. This result is formally stated in the following statement.
\begin{lemma}\label{lemma:DiscretizationSteo}
Given $\tau\in \{0,\dots, T-1\}$, $x(\tau)\in \N^{n+1}$ and $s(\tau)\in [0,s\ped{max}]$, consider  an optimal solution of the optimization problem \eqref{eq:mainLPmodel} denoted  by $C^\star_\tau,X^\star_\tau$, $Q^\star_\tau,S^\star_\tau$, $P^\star\ped{ntw,\tau},P^\star\ped{phq,\tau}$. Consider also $\wt c(\tau)\in \N^{n+1}$,  $\wt p\ped{ntw}(\tau)$, $\wt p\ped{phq}(\tau)$, $\wt q(\tau)\in \R$ defined in~\eqref{eq:integerPolicy},~\eqref{eq:DecreasedEnergy},~\eqref{eq:DedreasedCharge} and  realizations at time $\tau$ of the arrival/departures random processes, denoted by $\wt a(\tau)\in \N^{n+1}$ and $\wt d(\tau)\in \N^{n+1}$.
It holds that:
\begin{itemize}[leftmargin=*]
\item The following constraints are satisfied:
\[
\begin{aligned}
&P^0 \one\tran \wt c(\tau)  = \wt p\ped{ntw}(\tau) + \wt p\ped{phq}(\tau) ,\\
 &0\leq \wt p\ped{ntw}(\tau) \leq  \Pmax, \quad 
 0\leq \wt p\ped{phq}(\tau)\leq \bar P\ped{ph}(\tau) + \wt q(\tau),\\
 &0\leq \wt c(\tau) \leq  x(\tau), \quad \;|\wt q(\tau)| \leq q\ped{max}, 
 \end{aligned}
\]
\item The resulting next-step class and charge states satisfy
\[
\begin{aligned}
x(\tau+1)&=x(\tau)+B\wt c(\tau)+\wt a(\tau)-\wt d(\tau)\in \N^{n+1},\\s(\tau+1)&= s(\tau) - \wt q(\tau) \Delta\in [0,s\ped{max}].\\
\end{aligned}
\]
\end{itemize}
\end{lemma}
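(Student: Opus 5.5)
The plan is to derive every claim from the feasibility of the optimal solution of \eqref{eq:mainLPmodel} at time $\tau$, using monotonicity of the floor and of the max/min operations. Feasibility of $C^\star_\tau,\dots$ gives, at $t=\tau$:
\[
0\le c^\star(\tau)\le x(\tau),\qquad P^0\one\tran c^\star(\tau)=p^\star\ped{ntw}(\tau)+p^\star\ped{phq}(\tau),
\]
\[
0\le p^\star\ped{ntw}(\tau)\le\Pmax,\qquad 0\le p^\star\ped{phq}(\tau)\le\bar P\ped{ph}(\tau)+q^\star(\tau),
\]
\[
|q^\star(\tau)|\le q\ped{max},\qquad 0\le s(\tau)-q^\star(\tau)\Delta\le s\ped{max}.
\]
I would list these facts first. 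In particular, the last one yields $q^\star(\tau)\le s(\tau)/\Delta$.

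\emph{Charging and power variables.} Since $x(\tau)\in\N^{n+1}$ and the floor is monotone, $0\le\wt c(\tau)\le c^\star(\tau)\le x(\tau)$ with $\wt c(\tau)$ integer. Hence $P^0\one\tran\wt c(\tau)\le p^\star\ped{ntw}(\tau)+p^\star\ped{phq}(\tau)$.

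From \eqref{eq:PntwMigliorato} I get $0\le\wt p\ped{ntw}(\tau)\le\max\{p^\star\ped{ntw}(\tau),0\}=p^\star\ped{ntw}(\tau)\le\Pmax$. The equality constraint holds by the definition \eqref{eq:PphqMigliorato}. Rewriting, $\wt p\ped{phq}(\tau)=\min\{p^\star\ped{phq}(\tau),P^0\one\tran\wt c(\tau)\}$, so $0\le\wt p\ped{phq}(\tau)\le p^\star\ped{phq}(\tau)$.

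\emph{Storage variable.} This is the step needing the most care, namely the case analysis on the nested max/min in \eqref{eq:DedreasedCharge}. Write $m\doteq\min\{q^\star(\tau),\wt p\ped{phq}(\tau)-\bar P\ped{ph}(\tau)\}$, so that $\wt q(\tau)\ge m$.

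For the PV bound $\wt p\ped{phq}(\tau)\le\bar P\ped{ph}(\tau)+\wt q(\tau)$, I split on which term attains $m$.
\begin{itemize}
\item If $m=\wt p\ped{phq}(\tau)-\bar P\ped{ph}(\tau)$, the bound follows directly from $\wt q(\tau)\ge m$.
\item Otherwise $m=q^\star(\tau)$, and $\wt p\ped{phq}(\tau)\le p^\star\ped{phq}(\tau)\le\bar P\ped{ph}(\tau)+q^\star(\tau)\le\bar P\ped{ph}(\tau)+\wt q(\tau)$.
\end{itemize}

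For $|\wt q(\tau)|\le q\ped{max}$, the lower bound is explicit in the max. For the upper bound, each of the three arguments is at most $q\ped{max}$: $m\le q^\star(\tau)\le q\ped{max}$, $-q\ped{max}\le q\ped{max}$, and $(s(\tau)-s\ped{max})/\Delta\le0$.

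For $s(\tau+1)=s(\tau)-\wt q(\tau)\Delta$, the upper bound $s(\tau+1)\le s\ped{max}$ follows from $\wt q(\tau)\ge(s(\tau)-s\ped{max})/\Delta$. The lower bound $s(\tau+1)\ge0$ requires $\wt q(\tau)\le s(\tau)/\Delta$. This holds because all three arguments of the max are at most $s(\tau)/\Delta$: $m\le q^\star(\tau)\le s(\tau)/\Delta$, $-q\ped{max}\le0$, and $(s(\tau)-s\ped{max})/\Delta\le s(\tau)/\Delta$.

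\emph{Class state.} For $x(\tau+1)$, I invoke the stochastic part of Lemma~\ref{lemma:Preliminary} with the integer input $\wt c(\tau)$, which satisfies $0\le\wt c(\tau)\le x(\tau)$. Then $h(\tau)=x(\tau)+B\wt c(\tau)\in\N^{n+1}$, the binomial realization satisfies $0\le\wt d(\tau)\le h(\tau)$ componentwise, and $\wt a(\tau)\in\N^{n+1}$. Hence $x(\tau+1)=h(\tau)-\wt d(\tau)+\wt a(\tau)\in\N^{n+1}$.
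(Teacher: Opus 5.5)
Your proof is correct and follows essentially the same route as the paper's: it uses feasibility of the LP optimum at $t=\tau$, the identity $\widetilde p_{\mathrm{phq}}(\tau)=\min\{p^\star_{\mathrm{phq}}(\tau),P^0\mathbf{1}^\top\widetilde c(\tau)\}$, the same two-case split on which term attains the inner minimum in $\widetilde q(\tau)$, and Lemma~\ref{lemma:Preliminary} for the class state. The only cosmetic difference is that you bound each argument of the outer max directly by $q_{\mathrm{max}}$ and by $s(\tau)/\Delta$, whereas the paper shows $\widetilde q(\tau)\le q^\star(\tau)$ and inherits both bounds from $q^\star(\tau)$; you also make explicit the nonnegativity of $\widetilde p_{\mathrm{phq}}(\tau)$, which the paper leaves implicit.
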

The proof of this statement is given in Appendix~A, to avoid breaking the flow of the presentation.

Summarizing, the ``refined''  policy $\wt c(\tau)$,  $\wt p\ped{ntw}(\tau)$, $\wt p\ped{phq}(\tau)$, $\wt q(\tau)$  defined in~\eqref{eq:integerPolicy},~\eqref{eq:DecreasedEnergy},~\eqref{eq:DedreasedCharge}  is our heuristic control, to be applied at time $\tau$. Lemma~\ref{lemma:DiscretizationSteo} assures us that such control strategy provides a feasible trajectory, and allow us to compute the next (integer valued) initial state $x(\tau+1)$ and the state-of-charge of the charging system, i.e., $s(\tau+1)$. 

 At time step $\tau+1$, given such states $x(\tau+1)\in \N^{n+1}$ and $s(\tau+1)\in [0,s\ped{max}]$, we can observe the current photovoltaic production $P\ped{ph}(\tau+1)$, which allow us to initialize again the procedure, considering the stage-$(\tau+1)$ charging problem. We can then solve the corresponding LP program, to obtain, after the refinement defined in~\eqref{eq:integerPolicy},~\eqref{eq:DecreasedEnergy},~\eqref{eq:DedreasedCharge}, the policy at time step $\tau+1$; then, we iterate the process up to time $T-1$ to obtain the global control policy.\\
Summarizing, by carrying out this iterative procedure for $\tau = 0,\dots,T-1$, we obtain a heuristic policy $ \wt c(\tau), \wt q(\tau)$ which is constructed so as to satisfy the constraints, to  be implementable in an online fashion, and to minimize the expected cost over the remaining time interval $[\tau, T]$.

\subsection{Micromanagement at the vehicle's level}

Even after discretization, the schedule \( \tilde{c}(t) \) defined in~\eqref{eq:integerPolicy} does not specify \emph{which individual vehicles}
should be charged. This practical decision is made at a micro-management scale based on a deterministic precedence rule.
Upon arrival, each vehicle is assigned a unique ordinal identification number (ID),
so that \( k = 1, 2, \ldots \) represents its order of entry into the parking or charging facility. At time \( t \), for each class \( i\in \{0,\dots, n\} \), we:
\begin{enumerate}[leftmargin=*]
    \item collect the identifiers of all \( x_i(t) \) vehicles currently present in the lot into the set \( K_i(t) \); by construction, \( \tilde{c}_i(t) \leq x_i(t) \).
    \item determine the subset of vehicles to be charged according to a first-in-class rule:
    select the \( \tilde{c}_i(t) \) vehicles corresponding to the smallest identifiers in \( K_i(t) \),
    that is, those with the earliest arrival times.
\end{enumerate}

This approach guarantees that the integer charging decisions \( \tilde{c}(t) \) are faithfully implemented
at the individual vehicle level, preserving consistency with the optimal class-level decisions
while maintaining fairness and operational continuity within each class.

This micromanagement approach is duly applied to all 
control policies we simulate in Section~\ref{sec:NumericalSimulation}, and permits to obtain and evaluate all vehicle-level statistics
and performance indicators.


\section{Robustness to model ambiguity}
\label{sec:uncrob}
The proposed smart-charging model is stochastic: arrivals, departures and PV generation are random processes. However, the control action is computed at each step by solving problem~\eqref{eq:mainLPmodel}, which is based
on the expected values of the involved stochastic quantities.
When such expected values are imprecisely known, e.g., they are known only to belong to given intervals of confidence, we adopt the viewpoint that the controller is designed to be robust with respect to such model ambiguity. This viewpoint is consistent with standard robust-optimization formulations based on uncertainty sets for model parameters~\cite{BenTal2009,Bertsimas2011}.

 Since the expectation dynamics are affine in these parameters and the constraints are monotone with respect to them, solving~\eqref{eq:mainLPmodel} with conservative expected values yields a policy that remains feasible for all distributions whose moments lie within the specified ranges. In this sense, the LP~\eqref{eq:mainLPmodel} can be interpreted as a robust expectation model: expected quantities are employed, but they are chosen so as to hedge against misspecification of the stochastic model, without introducing probabilistic constraints and without altering the linear structure of the optimization problem.

From a technical standpoint, in order to formalize this degree of flexibility within our LP-based design framework, we study a tailored \emph{relaxation} of problem~\eqref{eq:mainLPmodel}, in which the state-recursion equality constraints are replaced by inequality constraints. We then establish that this relaxation is actually tight, i.e., that there is no gap between the original problem and its relaxed formulation. This is the objective of the following subsection;  in the subsequent one we use this technical result to formally present the robust version of our control strategy.
 
\subsection{Inequality-form optimization problem}
\label{sec-ineq}
In this subsection we consider again problem~\eqref{eq:mainLPmodel} and replace the equality constraints of the state recursion by inequalities.  For given $\tau\in \{0,\dots, T-1\}$ and $x(\tau)\in \N^{n+1}$, $s(\tau)\in [0,s\ped{max}]$, let us consider the ``relaxed'' problem 
\bea
\label{eq:mainLPmodel_ineq}
&\bar p_{rel,\tau} \hspace{-0.1cm}=&\hspace{-0.25cm}\min_{C_\tau,X_\tau,Q_\tau,S_\tau,P\ped{ntw,\tau},P\ped{phq,\tau}}    \sum_{t=\tau}^{T-1}\pi_t \Delta p\ped{ntw}(t) - \gamma \sum_{t=\tau}^{T}\beta \tran \bar x(t) \nonumber  \\
& \mbox{s.t.:}& 
P^0 \one\tran c(t)  = p\ped{ntw}(t) + p\ped{phq}(t) ,\quad \forall \,t\in \calT_\tau, \nonumber \\
&& 0\leq p\ped{ntw}(t) \leq  \Pmax , \quad\,\forall\, t\in \calT_\tau, \nonumber \\
&& 0\leq p\ped{phq}(t)\leq \bar P\ped{ph}(t) + q(t),\quad \forall \,t\in \calT_\tau, \nonumber \\
 && 0\leq c(t) \leq \bar x(t),\quad  t\in \calT_\tau, \nonumber \\
   &&  c_0(t)=0,\quad \forall\, t\in \calT_\tau , \nonumber\\
 && \bar x(t+1) \leq   \bar A(t)\left [\bar x(t) +B c(t)\right] + \bar a(t) , \;\;\forall \,t\in \calT_\tau, \nonumber \\
  &&    \bar x(\tau)\leq x(\tau), \nonumber\\
 && \bar s(t+1) =  \bar s(t) - q(t) \Delta, \;\forall\; t\in \calT_\tau  ,  \\
 && \bar s(\tau)=  s(\tau), \nonumber\\
&&  0\leq \bar s(t+1)\leq s\ped{max},\quad \forall t\in \calT_\tau, \nonumber\\
&&  |q(t)| \leq q\ped{max}, \; \forall\;t\in \calT_\tau.\nonumber
\eea
In this formulation, the equality constraints in~\eqref{eq:mainLPmodel} concerning the evolution of the state $x$ are replaced by inequality constraints. 
The following proposition holds, see the Appendix~B for a detailed proof.

\begin{proposition}\label{prop_ineqversion}
 For a given time $\tau\in \{0,\ldots,T-1\}$, initial conditions $x(\tau)\in \N^{n+1}$, $s(\tau)\in [0,s\ped{max}]$, let $C^\star\in \R^{n+1,T-\tau}$,  $Q^\star$,  $P^\star\ped{ntw}$, $P^\star\ped{phq}\in \R^{1,T-\tau}$,  and $X^\star\in \R^{n+1,T+1-\tau}$, $S^\star\in\R^{1,T+1-\tau}$ be an optimal solution\footnote{Since the time $\tau \in \{0,\ldots,T-1\}$ is fixed, for notational simplicity in the statement and the subsequent proof of Proposition~\ref{prop_ineqversion} we drop the $\tau$ subscript from the optimal matrices.} for problem \eqref{eq:mainLPmodel_ineq}, and let
$p^\star_{rel,\tau}$ be the corresponding optimal value. Let $p^\star_{\tau}$ be the optimal value of the original problem~\eqref{eq:mainLPmodel}.\\ Then the following hold:
\begin{enumerate}
\item If  $\gamma\geq 0$ and $\beta_i\geq 0$ for all $i$, then  $p^\star_{rel,\tau}=p^\star_\tau$.
\item  If
$\gamma>0$ and $\beta_i>0$ for all $i$, then $C^\star,Q^\star,P^\star\ped{ntw}$, $P^\star\ped{phq}, X^\star, S^\star$ is optimal also for~\eqref{eq:mainLPmodel}.
  \end{enumerate}
\end{proposition}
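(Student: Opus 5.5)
The plan is a standard two-step argument. First, the relaxed problem~\eqref{eq:mainLPmodel_ineq} has a larger feasible set than~\eqref{eq:mainLPmodel}, which gives $p^\star_{rel,\tau}\le p^\star_\tau$. Second, from any feasible (in particular optimal) point of the relaxation we build a point feasible for~\eqref{eq:mainLPmodel} with no larger objective. The first inequality is immediate, since every feasible point of~\eqref{eq:mainLPmodel} satisfies the inequality versions of the state recursion and of the initial condition with equality.

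For the reverse direction, take the relaxed optimum $(C^\star,X^\star,Q^\star,S^\star,P^\star\ped{ntw},P^\star\ped{phq})$. Keep $C^\star,Q^\star,S^\star,P^\star\ped{ntw},P^\star\ped{phq}$ unchanged and define a new state trajectory $\hat x$ by the equality recursion:
\[
\hat x(\tau)=x(\tau),\qquad \hat x(t+1)=\bar A(t)\left[\hat x(t)+Bc^\star(t)\right]+\bar a(t),\qquad t\in\calT_\tau .
\]
We then show by induction that $\hat x(t)\ge \bar x^\star(t)$ for all $t$. The base case is the constraint $\bar x^\star(\tau)\le x(\tau)$. For the inductive step, assume $\hat x(t)\ge \bar x^\star(t)$. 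Feasibility of the relaxed point gives $c^\star(t)\le \bar x^\star(t)\le \hat x(t)$. The monotonicity property~\eqref{eq:monoticity} of Lemma~\ref{lemma:Preliminary}, applied with $y=\hat x(t)$ and $x=\bar x^\star(t)$, together with the relaxed recursion inequality, then yields
\[
\hat x(t+1)\ge \bar A(t)\left[\bar x^\star(t)+Bc^\star(t)\right]+\bar a(t)\ge \bar x^\star(t+1).
\]
Here one should check that~\eqref{eq:monoticity} only needs $\bar A(t)\ge 0$ diagonal, so that the hypothesis $x\ge c(t)$ is harmless. The same comparison gives $0\le c^\star(t)\le \hat x(t)$, so the charging bounds hold along $\hat x$.

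All remaining constraints (power balance, grid and PV bounds, storage dynamics and bounds, $c_0=0$) do not involve $x$ and are untouched. So the modified point is feasible for~\eqref{eq:mainLPmodel}. Its objective differs from the relaxed one only by
\[
-\gamma\sum_{t=\tau}^{T}\beta\tran\left(\hat x(t)-\bar x^\star(t)\right)\le 0,
\]
because $\gamma\ge0$, $\beta\ge0$ and $\hat x\ge\bar x^\star$. Hence $p^\star_\tau\le p^\star_{rel,\tau}$, which proves item~1.

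For item~2, assume $\gamma>0$ and $\beta>0$ componentwise. If some inequality in the relaxed state constraints were strict at the optimum, the induction above would propagate it: at the first index with strict inequality we get $\hat x(t)\ne\bar x^\star(t)$ with $\hat x(t)\ge \bar x^\star(t)$, so some component of $\hat x(t)$ exceeds $\bar x^\star(t)$. The objective would then strictly decrease, which contradicts optimality of the relaxed solution. Therefore $\hat x=\bar x^\star$, all relaxed state constraints are tight, and the relaxed optimum is feasible for~\eqref{eq:mainLPmodel}. By item~1 its value equals $p^\star_\tau$, so it is optimal there as well.

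The main point requiring care is the inductive comparison step, namely that the charging feasibility $c^\star(t)\le\hat x(t)$ is inherited at each stage so that monotonicity can be invoked. The strictness argument in item~2 then follows directly from that comparison.
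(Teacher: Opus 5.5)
Your proof is correct and follows essentially the paper's route. You use feasible-set inclusion to get $p^\star_{rel,\tau}\le p^\star_\tau$, then remove the slack in the state inequalities by propagating the equality recursion forward and invoking the monotonicity~\eqref{eq:monoticity} of Lemma~\ref{lemma:Preliminary} together with the sign of $\gamma\beta$. The only difference is packaging: you rebuild the whole trajectory $\hat x$ from $x(\tau)$ in one shot, whereas the paper corrects a single component at the earliest slack time and then repeats that step, so your version is a slightly cleaner form of the same idea.
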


Roughly speaking, Proposition~\ref{prop_ineqversion} shows that whenever $\gamma,\beta \ge 0$, we can solve the inequality formulation~\eqref{eq:mainLPmodel_ineq} and obtain an optimal objective value $p^\star_{rel,\tau}$ that coincides with the optimal value $p^\star_\tau$ of the original equality-constrained problem~\eqref{eq:mainLPmodel}. Moreover, when $\gamma,\beta>0$, any optimal solution of~\eqref{eq:mainLPmodel_ineq} is necessarily  optimal also for~\eqref{eq:mainLPmodel}.

The main advantage of working with the inequality formulation~\eqref{eq:mainLPmodel_ineq} is that convex inequality constraints preserve convexity when enforced in a robust (i.e., worst-case) sense, as discussed in the next subsection.

\subsection{Robust LP implementation}
\label{subsec:robustness_ambiguity}
In this subsection, we assume that expected values of the stochastic processes underlying the control problem are known only within prescribed confidence intervals.
More precisely, the instantaneous expected power generated by the photovoltaic system, denoted by $\bar P\ped{ph}(t)$, is described through the interval uncertainty model
\[
\bar P\ped{ph}(t) \in[  \bar P\ped{ph,inf}(t),  \bar P\ped{ph,sup}(t)],  \quad \forall\, t\in\{0,\ldots,T-1\},
\]
where $0\leq \bar P\ped{ph,inf}(t)\leq   \bar P\ped{ph,sup}(t)$ for every $t\in \{0,\dots ,T-1\}$.
Similarly, average vehicle arrivals are expressed as 
\[
\bar a_i(t) \in [\bar a_{i,\text{inf}}(t),\, \bar a_{i,\text{sup}}(t)],  \quad \begin{aligned}\forall\, t\in\{0,\ldots,T-1\},\quad\forall \, i\in \{0,\ldots,n\},\end{aligned}
\]
where  $0\leq \bar a_{i,\text{inf}}(t)\leq \bar a_{i,\text{sup}}(t)$.
Uncertainty in the nominal departure probabilities $\bar \alpha(t)$ entering $\bar A(t) \doteq I - \mbox{diag}(\bar \alpha(t))$ is dealt with again via an interval model of the form
\[
\bar \alpha_i(t)  \in [\bar \alpha_{i,\text{inf}}(t),\, \bar \alpha_{i,\text{sup}}(t)],  \quad \begin{aligned} \forall\, t\in\{0,\ldots,T-1\},\quad
\forall\,i\in\{0,\ldots,n\},
\end{aligned}
\]
where $ 0\leq \bar \alpha_{i,\text{inf}}(t)\leq  \bar \alpha_{i,\text{sup}}(t)\leq 1$.
Accordingly, let us introduce the notation
\[
\bar A_{\text{inf}}(t) \doteq I - \text{diag}(\bar \alpha_{\text{sup}}(t))\leq \bar A_{\text{sup}}(t) \doteq I - \text{diag}(\bar \alpha_{\text{inf}}(t)), 
\]
for all $t\in \{0,\dots, T-1\}$, where the $\text{inf}$ and $\text{sup}$ labels are interchanged so as to preserve the component-wise ordering. 
In the robust approach we shall minimize the  objective function of problem \eqref{eq:mainLPmodel_ineq}, while ensuring that the constraints remain feasible for all possible realizations of the uncertainties. 

In the aforementioned interval model, this amounts to consider the worst-case constraint inequalities, that is, the inequalities that induce the smallest (i.e., minimal with respect to the inclusion partial relation) feasible set. Summarizing, the arising ``worst case'' LP reads:
\bea
\label{eq:mainLPmodelWORSTCASE}
\hspace{-0.25cm} &\bar p_{worst,\tau}&\hspace{-0.25cm} =\hspace{-0.25cm}\min_{\substack{C_\tau,X_\tau,Q_\tau,\\S_\tau,P\ped{ntw,\tau},P\ped{phq,\tau}}}
 \sum_{t=\tau}^{T-1} \pi_{t} \Delta p\ped{ntw}(t) - \gamma \sum_{t=\tau}^{T}\beta \tran \bar x(t)  \nonumber \\
& \mbox{s.t.:} &  
P^0 \one\tran c(t)  =  p\ped{ntw}(t) + p\ped{phq}(t) ,\quad \forall \,t\in \calT_\tau, \nonumber, \\
&& 0\leq p\ped{ntw}(t) \leq  \Pmax , \quad t\in \calT_\tau, \nonumber \\
&& 0\leq p\ped{phq}(t)\leq \bar P\ped{ph, \text{inf}}(t) + q(t),\quad \forall \,t\in \calT_\tau, \nonumber \\
 && 0\leq c(t) \leq \bar x(t),\quad  t\in \calT_\tau, \nonumber \\
   &&  c_0(t)=0,\quad \forall\, t\in \calT_\tau , \nonumber\\
 && \bar x(t+1) \leq  \bar A\ped{inf}(t) \left [\bar x(t)+B c(t)\right] + \bar a_{\text{inf}}(t), \nonumber \\
  &&    \bar x(\tau)\leq x(\tau), \nonumber\\
 && \bar s(t+1) =  \bar s(t) - q(t) \Delta, \;\forall\; t\in \calT_\tau  ,  \\
 && \bar s(\tau)= s(\tau), \nonumber\\
&&  0\leq \bar s(t+1)\leq s\ped{max},\quad \forall\, t\in \calT_\tau, \nonumber\\
&&  |q(t)| \leq q\ped{max}, \quad \forall\,t\in \calT_\tau,\nonumber
\eea
Suppose that, for each $\tau \in \{0, \dots, T\}$, we solve the LP problem~\eqref{eq:mainLPmodelWORSTCASE} instead of the ``true'' problem~\eqref{eq:mainLPmodel_ineq}, which would require knowledge of the exact $\bar P\ped{ph}(t), \bar a_i(t), \bar \alpha_i(t)$ -- information that is unavailable in the ``ambiguous'' setting considered in this section. It can be easily proven, via~Lemma~\ref{lemma:DiscretizationSteo} and Proposition~\ref{prop_ineqversion}, that any optimal solution for problem~\eqref{eq:mainLPmodelWORSTCASE} is feasible also for the problem~\eqref{eq:mainLPmodel_ineq}, for any occurrence of the statistics 
\[
\begin{aligned}
& \bar P\ped{ph}(t) \in [  \bar P\ped{ph,inf}(t),  \bar P\ped{ph,sup}(t)],\\\bar a_i(t) \in [&\bar a_{i,\text{inf}}(t),\, \bar a_{i,\text{sup}}(t)], \;\;\bar \alpha_i(t)  \in [\bar \alpha_{i,\text{inf}}(t),\, \bar \alpha_{i,\text{sup}}(t)], 
\end{aligned}
\]
for all  $t\in\{0,\ldots,T-1\}$, for all $i\in \{0,\dots n\}$.   
Moreover, along any realization governed by such (unknown) statistics, we can implement the corresponding policy $c(\tau),q(\tau)$ at step $\tau$, after discretizing $c(\tau)$ and refining the other variables, as described in Subsection~\ref{subsec:RecedingHorizonProcedure}. 

Summarizing, the overall robust control policy (whether considered under the nominal model in expectation or when implemented via a realization-driven shrinking-horizon procedure) is guaranteed to remain feasible
regardless of the actual expectation statistics, as long as they stay within their confidence intervals. 
The robust solution is, in principle, suboptimal with respect to 
a truly optimal solution computed assuming perfect advance knowledge of the process expectations. However, since 
such advance knowledge is not realistic, the optimal solution provides in practice a safer control policy, guaranteed to be feasible for a range of possible process expectations.

The qualitative differences of a robust control policy with respect to the nominal one is illustrated via numerical experiments in Section~\ref{subsec:robustness_simulations}.
%

\section{Numerical simulations}\label{sec:NumericalSimulation}

We next present an extensive campaign of numerical experiments,  to illustrate the behavior and validate the proposed shrinking-horizon smart charging framework. First, we 
discuss comparatively our approach with 
 the batch-type model predictive controller (MPC) proposed by Hermans et al. in~\cite{Hermans2024}.
 Then, we present the main validation experiments 
via large-fleet simulations involving nine configurations of price/arrival-intensity profiles, each run over 100 realizations of the stochastic parameters in the model. For these simulations, we
compare the performances of our proposed model against  a baseline First‑In‑First‑Served (FIFS) strategy, which is a standard and widely adopted benchmark in the EV charging literature,  commonly used to assess smart charging algorithms~\cite{Calafiore2025, Liu2021, Madaram2024}.
All the numerical simulations, figures and tables in the paper are reproducible using the code in the reproducibility package available in the  \href{https://github.com/beppe969/SmartCharge}{\texttt{SmartCharge\_code} } repository.

\subsection{Batch-Type MPC for EV~Charging:  Hermans et al. (2024)}\label{sec:Batch-Type MPC}

A relevant benchmark for aggregate-level EV charging control is the batch-type model predictive controller (MPC) proposed by Hermans et al. in~\cite{Hermans2024}.
In this paper, the authors consider a real-world office microgrid located in the Netherlands with 174 chargers, PV generation, and a 1~MW grid connection limit. Since individual vehicle data are unavailable, the daily EV charging demand is modeled as a single batch energy \(E_{\mathrm{evs}}^{(d)}\), forecasted from historical occupancy patterns. Their MPC, formulated as a quadratic program with 15-minute sampling, optimizes the aggregate charging power \(P_{\mathrm{evs}}(k)\) over a dynamic horizon to meet two energy targets: a minimum energy \(E_{T_1}\) by an intermediate checkpoint \(T_1\) (e.g., 16:00) and the full requested energy \(E_{T_2}\) by \(T_2\) (e.g., 23:00), while reducing the daily grid peak power \(\lvert P_{\mathrm{grid}}^{\mathrm{peak}} \rvert\). The controller enforces power balance \(P_{\mathrm{grid}}(k)=P_{\mathrm{evs}}(k)-P_{\mathrm{pv}}(k)\), grid limits, and path constraints on \(P_{\mathrm{evs}}\) derived from forecasted connected vehicles and clipping thresholds \cite{Hermans2024}.

\begin{figure}[t!] 
    \centering
    \includegraphics[width=0.49\textwidth]{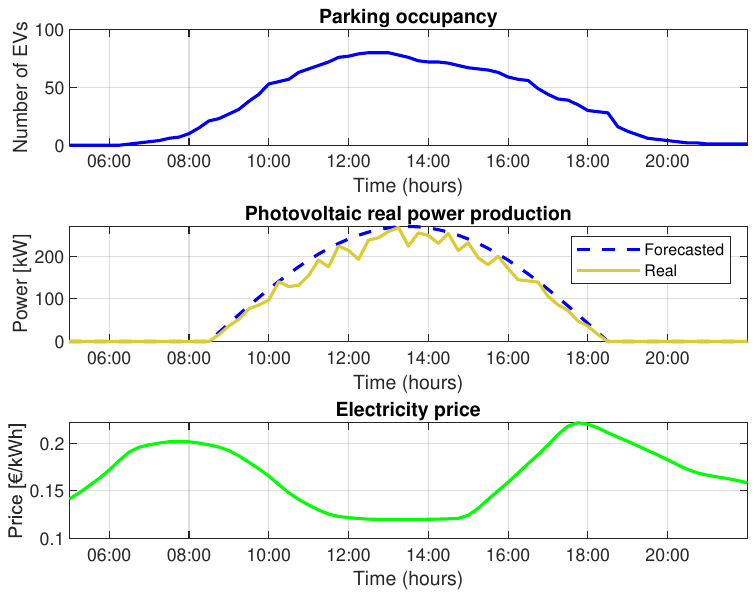} 
    \caption{Input data for simulation inspired by Hermans et al~\cite{Hermans2024}. Top plot: number of EVs inside the charging station system during the day; middle plot: forecasted and actual PV power production from 8 a.m. to 6 p.m.; bottom plot: electricity price pattern on February 13th, 2023, in the Netherlands.}
    \label{fig:hermans_setting}
\end{figure}
\subsubsection{Numerical comparison}
Although our class-based model is designed to address large-scale EV fleet scenarios and has a different optimization objective with respect to~\cite{Hermans2024}, we aimed to make a fair comparison with this approach by recreating a similar system setting. We analyzed 89 EVs entering the charging station system over the course of one day, distributed as shown in the top plot of Figure \ref{fig:hermans_setting}, which closely resembles the pattern considered by~\cite{Hermans2024} in Figure 9c. The photovoltaic power generation throughout the day, as depicted in the middle plot of Figure \ref{fig:hermans_setting}, occurs between 8 AM and 6 PM, aligning with the data presented in Figure 11b of~\cite{Hermans2024}. The bottom plot illustrates the electricity price used for our  comparison, which reflects the actual price (sourced from~\cite{Italy_data}, which collects the day-ahead electricity price data for European countries) in the Netherlands on February 13th, 2023, that is, the same day \cite{Hermans2024} considered for  their control strategy.
Since no storage system was considered in \cite{Hermans2024}, we accordingly set $s\ped{max}=q\ped{max}=0$ in our implementation. This also emphasizes that, although storage represents an important feature of our model, it can be easily excluded thanks to the modular structure of the proposed LP formulation, setting the corresponding parameters to zero.

The simulations were conducted using a $15$-minute time step, following the same discretization methodology as in~\cite{Hermans2024}. For our model, we set the number of classes to $n = 17$, balancing computational granularity with the need to effectively represent small-scale parking scenarios such as the one studied in~\cite{Hermans2024}, where approximately $80$ vehicles visited the parking facility during the day under analysis. The other physical parameters were chosen accordingly with the values reported in~\cite{Hermans2024}. The satisfaction component of the cost function in \eqref{eq:Satisfaction} is defined by setting $\beta_i = 0.5^i$ (see Appendix C for more details), and the trade-off parameter $\gamma$ was set to $0.5$ following a calibration procedure aimed at
balancing 
cost reduction and service quality.
Figure~\ref{fig:hermans_simulated} reproduces the qualitative behavior of the batch-MPC benchmark reported in Figure~9c of~\cite{Hermans2024}: the charging power is split between PV energy and grid-purchased energy over the course of the day. Figure~\ref{fig:hermans_comparison} compares the power drawn from the grid by Hermans' controller and by our class-based model.

 The two controllers optimize different objectives: Hermans' batch MPC directly targets peak shaving and therefore keeps the grid profile flatter, whereas our controller is cost-oriented and can concentrate purchases in cheaper periods, while also accounting for EV user satisfaction, which is not considered by the Batch model. Consistently with this interpretation, Table~\ref{tab:Hermans} shows that Hermans' controller attains a lower grid peak, while the proposed controller achieves a lower average purchase price per kWh from the grid, in line with its cost-focused objective. Because the two controllers rely on different objectives and aggregate information structures, this comparison should be interpreted as a behavioral benchmark on a small-fleet scenario rather than as a claim of strict dominance. The primary test bed for the proposed approach remains a large-fleet setting, where class aggregation is most advantageous. This setting is examined in detail in the following sections.



\begin{figure}[t!]
    \centering
    \begin{subfigure}{0.48\textwidth}
        \centering
        \includegraphics[width=\textwidth]{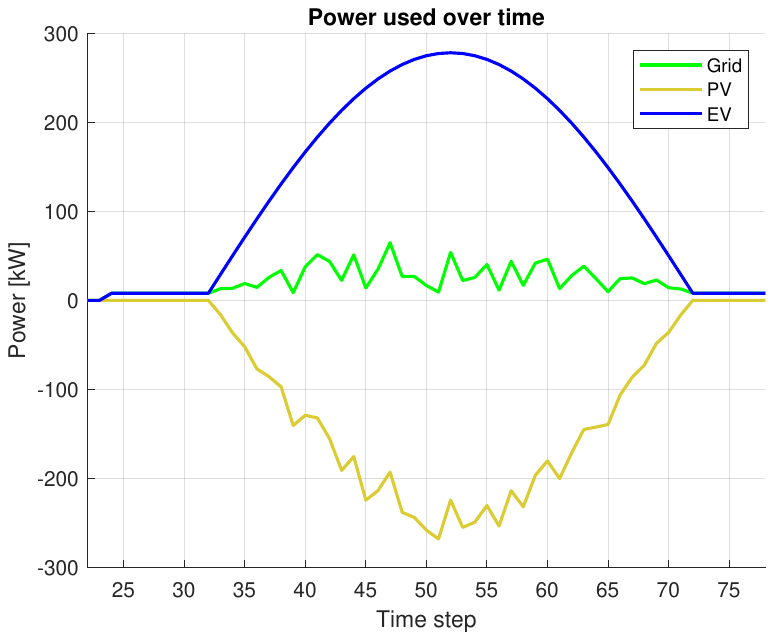}
        \caption{Reproduction of Hermans MPC: power used to charge EVs comes from grid or PV production.}
        \label{fig:hermans_simulated}
    \end{subfigure}
    \hfill
    \begin{subfigure}{0.48\textwidth}
        \centering
        \includegraphics[width=\textwidth]{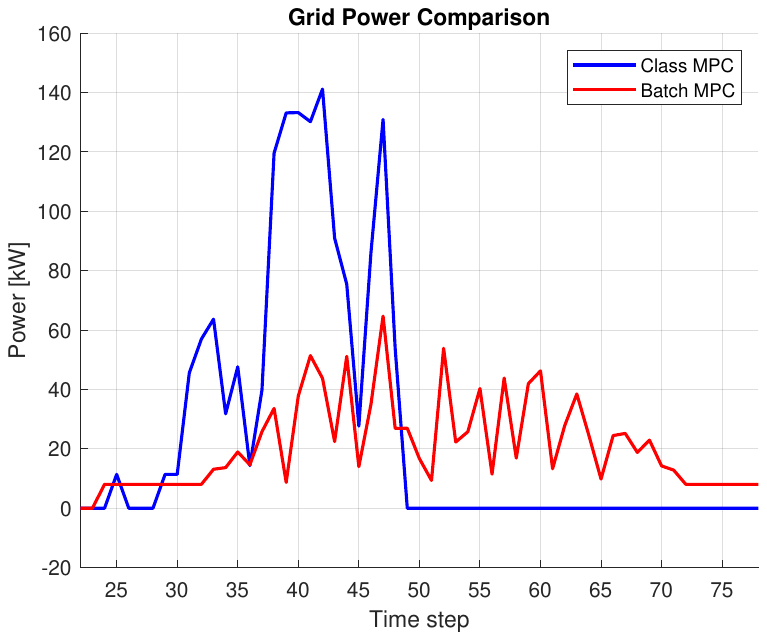}
        \caption{Comparison of grid power between Hermans MPC (red) and our class MPC (blue); Hermans' MPC aims at peak shaving, ours at cost-saving.}
        \label{fig:hermans_comparison}
    \end{subfigure}
    \caption{Comparison between Hermans MPC and our class MPC.}
    \label{fig:hermans_sidebyside}
\end{figure}

\begin{table}[ht]
    \centering 
       \caption{Output comparison between Hermans' Batch MPC and our proposed Class MPC.}
    \begin{tabular}{@{}lccc@{}}
        \toprule
        Indicator & Unit       & Batch & Class \\ \midrule
        Number of vehicles        & [ ]             & 89         & 89         \\
        Total energy given        & [kWh]           & 1846.8    & 1846.8    \\
        Grid power peak           & [kW]            & 64.7       & 141.2      \\
       Purchased energy     & [kWh]           & 316.25     & 364.12     \\
        Grid purchased {total} cost       & [\euro]         & 48.53      & 54.91      \\
        Cost per {purchased} kWh              & [\euro/kWh]     & 0.1534      & 0.1508      \\
        \bottomrule
    \end{tabular}
     \label{tab:Hermans}
\end{table}

\subsection{First-In-First-Served (FIFS) charging strategy with PV-First energy management}\label{sec:FIFS}
We next describe in more detail the FIFS method that will be used 
as a baseline for performance comparison.
The FIFS strategy implements a simple yet effective heuristic for scheduling electric vehicle~(EV) charging based on arrival order and class progression. Vehicles are assigned to charging slots in the order of arrival, and their charging process progresses through predefined classes that reflect increasing states of charge. The simulation maintains a queue $\mathcal Q(t)$ of connected vehicles per class, and at each time step $t$, vehicles may depart stochastically based on class-specific departure probabilities $\bar\alpha_{k(v)}(t)$, where $k(v)$ is the class of the $v$-th EV. Departures are simulated using a Bernoulli sampling process, and class updates ensure that vehicles progress toward lower-index classes once charged in their current class.

New vehicle arrivals are added to their corresponding class queue, and the charging schedule is updated as follows: priority is given to vehicles already being charged (continued charging), and only when additional power is available new vehicles are assigned to free charging slots, respecting physical power limits. The number of slots assigned is determined by the remaining power budget divided by the nominal socket power. Class advancement occurs when vehicles complete a charge cycle, moving from class $i$ to $i-1$, until reaching class 0.

The energy management strategy follows a PV-first policy. At each time step, the power $P_c(t)$ required for charging is computed, and the system allocates resources in a prioritized order. First, the algorithm uses available photovoltaic (PV) power to satisfy as much of the charging demand as possible. If the PV generation is insufficient, the storage unit is discharged to cover the remaining deficit, subject to discharge power limits and current state of charge. If a gap still remains, the system draws energy from the power grid, up to a maximum contracted limit. Any unused PV energy at that time step is considered for storage charging, again subject to the maximum power exchange rate and capacity bounds. This approach is entirely deterministic with respect to power allocation, relying only on stochasticity for modeling arrivals and departures. It respects all hardware constraints like grid limit, PV availability, storage capacity and power limits. However, it does not optimize cost or customer satisfaction explicitly, and charging slot allocation is solely based on availability and arrival time, without anticipating future arrivals or energy price variations. The FIFS algorithm with PV and storage handling is summarized in Appendix D.

The FIFS policy should be interpreted as a service-greedy benchmark. For a fixed realization of the system uncertainty and a fixed sequence of available charging capacities, FIFS is work-conserving and lexicographically optimal with respect to the cumulative charging service delivered to vehicles ordered by arrival time. Indeed, whenever a feasible schedule charges a later vehicle while an earlier unfinished vehicle is available, exchanging that charging quantum in favor of the earlier vehicle preserves feasibility and improves the arrival-order service vector. In this sense, FIFS represents an extreme operating point that prioritizes immediate service and arrival-order fairness while being completely agnostic to electricity prices and future PV availability. Our  proposed approach introduces instead scheduling flexibility by allowing a controlled reduction of this immediate-service priority in exchange for lower energy cost.

\subsection{Configurations setting}\label{Sec:configuration_setting}
The large-fleet case study considers a charging hub with approximately 200 parking spaces equipped with $22$~kW AC charge points, of which up to 100 can draw power from the grid simultaneously, corresponding to $\Pmax = 2200$~kW. The selected station size is intended to represent a large charging hub rather than a typical public charging site. Real-world deployments and planning studies show that EV charging hubs can comprise more than 100 charging ports \cite{Eleport2025LargestHubs, Heath2024}, and rely on co-located battery energy storage system and PV to mitigate grid connection limits and peak demand \cite{Fresia2025, IEAPVPS2025ChargingStations}. For this reason, the adopted battery capacity and network power limit should be interpreted as representative of a high-demand charging node with constrained grid interconnection, where storage is used to buffer demand peaks and smooth the interaction with the upstream grid \cite{Rehman2024}. For the purposes of this paper, we assume that the charging station has sufficiently large parking capacity, so all arriving vehicles are admitted in the parking.
%
Our dynamic approach is evaluated across multiple  configurations, each characterized by distinct features. For an extensive simulation campaign, we consider three different electricity price curves combined with three daily vehicle-arrival patterns, all modeled as non-homogeneous Poisson processes but with different shapes.

Two of the electricity price profiles are obtained from real data collected in Italy in 2025 \cite{Italy_data}, using the average hourly prices recorded in {June} and {December} and resampling them to match the time-step length $\Delta$. These two months correspond to the summer and winter solstices, are six months apart, and, as shown in Figure~\ref{fig:tariffs}, exhibit markedly different price profiles in terms of shifted peaks and valleys. The third electricity price profile follows a different, piecewise-constant structure: it is derived from the {Vietnam} Electricity business-customer retail tariff \cite{EVNTariff2025}, using the published off-peak, standard, and peak rates together with EVN's time-of-use period definitions \cite{EVNTOU2016}. 
The official half-hour boundaries are mapped onto our $\Delta$-time-step discretization, and the conversion rate of 30000~VND = 1~\euro     is adopted to enable comparison with the other two price profiles. These three price profiles are concisely denoted, in what follows, by \emph{June},~\emph{December} and~\emph{Vietnam}.

\begin{figure}[t!]
    \centering

    \begin{subfigure}{0.49\textwidth}
        \centering
        \includegraphics[width=\linewidth]{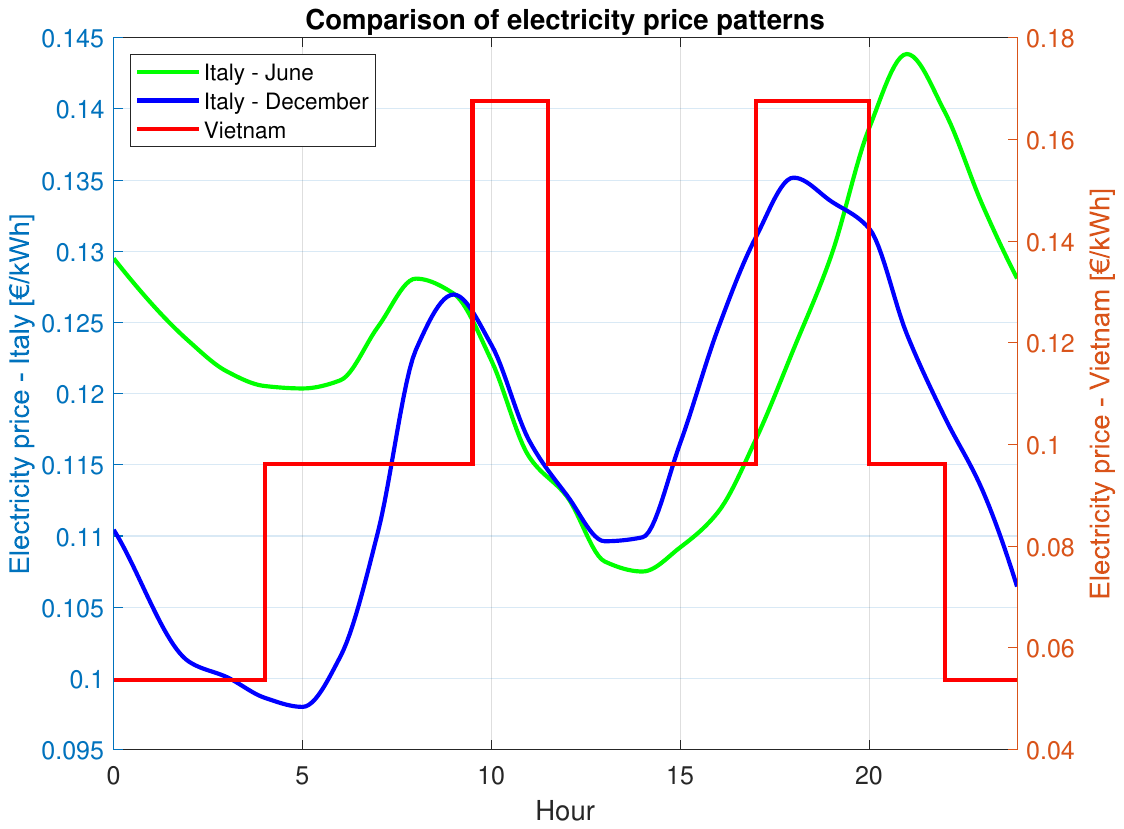}
        \caption{Electricity price patterns considered: averaged prices from Italy (June and December 2025) and Vietnam tariff.}
        \label{fig:tariffs}
    \end{subfigure}
    \hfill
    \begin{subfigure}{0.49\textwidth}
        \centering
        \includegraphics[width=\linewidth]{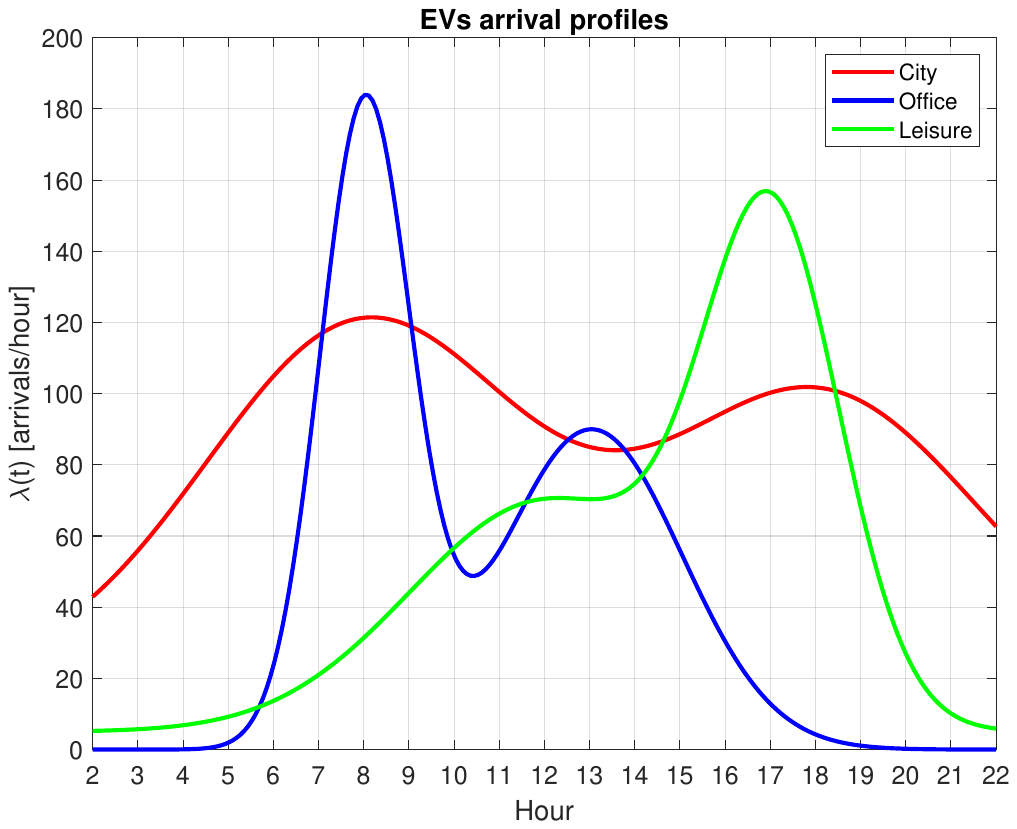}
        \caption{Poisson arrival profiles for office, city and leisure EV charging demand.}
        \label{fig:arrivals_profile}
    \end{subfigure}

    \caption{Left: electricity price patterns. Right: EV arrival profiles.}
\end{figure}

The three EV arrival patterns considered are illustrated in Figure~\ref{fig:arrivals_profile} and represent three common scenarios. The first pattern models an \emph{Office} private parking lot, where nighttime arrivals are zero and most EVs arrive in the early morning as commuters head to work; a second, smaller peak is also included around lunchtime to capture part-time workers. The second pattern represents a \emph{City}-downtown parking lot, with a lower peak than the office case but with EV arrivals spread more evenly throughout the day, including off-peak hours. The last pattern describes a \emph{Leisure} parking lot, for instance near a large amusement park, where arrivals are concentrated in the afternoon or after working hours. Furthermore, and since we are considering a time horizon of 24 hours, we always assume the station's gate to be closed for entrance before 2 a.m. and after 10 p.m. This feature is formally obtained by setting the arrival rates $\lambda(t) = 0$ at both these initial and final time intervals, consistently across all three scenarios in Figure~\ref{fig:arrivals_profile}.
The profile is first specified in vehicles per hour and then converted into the per-step Poisson mean used in the discrete-time simulations, again denoted by $\lambda(t)$ for simplicity. EVs are then divided into classes depending on a real-world calibrated per-class probability $p_i$ (see Appendix C). These three arrivals patterns are concisely denoted, in what follows, by \emph{Office},~\emph{City} and~\emph{Leisure}, respectively.

The resulting simulation setting, with 9 different combinations of  electricity prices (\emph{June, December}, and \emph{Vietnam}) and EVs arrival patterns (\emph{Office, City}, and \emph{Leisure}), allows our model to be fairly tested across multiple realistic configurations, showing its potential to adapt to several contexts. 

\subsubsection{Parameters setting}\label{sec:Parameters_setting}

To instantiate the shrinking-horizon controller in a realistic large-fleet scenario, we specify the required parameters.
We detail here the selection of the control design parameters, namely the overall time horizon $[0,T]$, the number of classes $n$ and the discretization step $\Delta$. To properly define these variables, we first need to fix the physical parameter representing the instantaneous maximum charging power of each charging socket, denoted by $P^0$. We chose $P^0 = 22\,\mathrm{kW}$, as justified in Appendix C. 
Given this, we set $T$, $n$, $\Delta$ as follows:
\begin{itemize}[leftmargin=*]
    \item \textbf{Time horizon $T$:} the simulation horizon is set to be $T=\frac{24 \text{hours}}{\Delta}$, modeling the whole evolution of the charging station during a working day.
    \item \textbf{Number of classes $n+1$:} a representative mid-size battery electric vehicle typically features a usable battery capacity on the order of 60 kWh, which lies within the range reported for many contemporary compact and mid-size models \cite{EVDatabase_Capacity_2024}. For such a vehicle, a 22 kW AC charge point provides a reasonable benchmark: under conditions with approximately constant power, a full charge from empty to full would take about three hours, consistent with values reported in technical guides and consumer-oriented charging references \cite{PodPoint_ChargingTime_2025}. In practice, tapering and onboard-charger limits may increase this time, but three hours remains a realistic order of magnitude for modeling purposes. In a time-discretized optimization framework with time step $\Delta$ (in hours), achieving a charging horizon of roughly three hours corresponds to using about $n \approx 3/\Delta$ classes in our model.

    \item \textbf{Time step $\Delta$:}  the length of the simulation step is crucial for the computational time, and it determine each class duration. We evaluated the average simulation runtime over 100 realizations of problem \eqref{eq:mainLPmodel} at the first step ($\tau = 0$), and the corresponding results are reported in Table \ref{tab:computational_time}. We then chose $\Delta = 5$ minutes as an effective tradeoff between runtime and time granularity. The corresponding simulation horizon (24 h) yields $T=288$. Accordingly, in the numerical study we use $n=30$, that is, 31 classes including class $0$.

\end{itemize}

The values of the remaining model parameters are summarized in Table~\ref{tab:parameters}, while a detailed discussion of these choices is deferred to Appendix C to preserve the flow of the presentation.

\begin{table}[ht!b]
    \centering
    \caption{Comparison of computational time for different choices of $n$, $\Delta$ and $T$, evaluated across 100 simulations.}
    \label{tab:computational_time}
    \renewcommand{\arraystretch}{0.95}
    \setlength{\tabcolsep}{2.1pt}
    \begin{tabular}{|c|c|c|c|c|c|}
        \hline
        \textbf{$n+1$} & \textbf{$\Delta$} & \textbf{T} &
        \textbf{LP variables} & \textbf{Max runtime} & \textbf{Avg runtime} \\
        \hline
        4 & 60 min & 24 &
        293 & 0.49 s & 0.29 s \\
        \hline
        6 & 30 min & 48 &
        775 & 0.50 s & 0.36 s \\
        \hline
        11 & 15 min & 96 &
        2508 & 0.73 s & 0.57 s \\
        \hline
        16 & 10 min & 144 &
        5201 & 0.89 s & 0.80 s \\
        \hline
        31 & 5 min & 288 &
        19040 & 2.79 s & 2.06 s \\
        \hline
        51 & 3 min & 480 &
        50932 & 7.18 s & 5.68 s \\
        \hline
    \end{tabular}
\end{table}

\begin{table*}[t!]
\caption{Summary of model parameters in Subsection~\ref{sec:Parameters_setting} and Appendix C.} 
\centering
\renewcommand{\arraystretch}{1.3} 
\begin{adjustbox}{width=0.97\textwidth}
\begin{tabular}{|l|c|c||l|c|c|}
\hline
\textbf{Name} & \textbf{Symbol} & \textbf{Value} &
\textbf{Name} & \textbf{Symbol} & \textbf{Value} \\
\hline
Time step & $\Delta$ & $5$ minutes &
Time horizon & $T$ & $\frac{24}{\Delta} = 288$ steps \\
\hline
Max charging power & $P^0$ & $22$ kW &
Number of classes & $n+1$ & $31$ (with $n=30$) \\
\hline
Price & $\pi_i$ & \{\emph{June},\emph{December},\emph{Vietnam}\} & Max network power & $\Pmax$ & 2.2 MW 
 \\
\hline
Arrival rate & $\lambda(t)$ & \{\emph{Office},\emph{City},\emph{Leisure}\} &
Entry class probability & $p_i$ & Based on \cite{Shenzen_data} \\
\hline
Base departure rate & $\alpha_i^{\mathrm{base}}$ & $ \frac{n+1-i}{n+2}$ &
Departure rate & $\alpha_i(t)$ & $\alpha_i^{\mathrm{base}} \times 0.15$ \\
\hline
Storage capacity & $s_{\mathrm{max}}$ & $2.2$ MWh &
Max storage exchange & $q_{\mathrm{max}}$ & $1.1$ MW\\
\hline
PV profile  & $\hat P_{ph}(t)$ & Sinusoidal (peak at 12:30)&
PV realization & $P_{ph}(t)$ & $\hat P_{ph}(t) - \xi_t$, \hspace{0.1cm}$\xi_t \sim U(0, 0.75\hat P_{ph}(t))$ \\
\hline
Satisfaction weights & $\beta_i$ & $2^{-i}$ &
Tradeoff parameter & $\gamma$ & Properly calibrated \\
\hline
Initial EV state & $x_i(0)$ & $0$ for all $i$ &
Initial storage & $s(0)$ & $0$ kWh \\
\hline
\end{tabular}
\end{adjustbox}
\label{tab:parameters}
\end{table*}

\subsubsection{Tradeoff parameter $\gamma$: Pareto analysis}\label{sec:Pareto analysis}

The tradeoff parameter $\gamma$, balancing the focus of the optimization model between cost-saving and customer satisfaction, must be chosen considering the order of magnitude of the two terms of the objective function in \eqref{eq:mainLPmodel}, but also depending on the user preference, since bigger values for $\gamma$ lead to more customers-aware/satisfaction-oriented solutions. 

To make the tradeoff apparent, we  computed the Pareto frontiers
for each of the nine price/arrivals configurations that we discussed previously.
This computation was performed by solving  the fixed horizon problem~\eqref{eq:mainLPmodel} at $\tau=0$, for several grid values of $\gamma$, all the other parameters being held fixed.
Then, for each of the nine problems and for each optimal solution for a given $\gamma$, we plotted a point on the corresponding Pareto curve, by putting on the horizontal axis the average relative class improvement per vehicle (computed as arrival class minus departure class, divided by arrival class) and on the vertical axis the average cost per delivered MWh. The result is the nine Pareto curves shown in Figure~\ref{fig:frontiere}. 
Next, we used these curves to calibrate a suitable $\gamma$ value for each configuration, to be used in the later actual dynamic 
validation campaign in Section \ref{sec:validation_campaign}.
To this purpose, we simply selected the smallest $\gamma$ that ensures at least 50\% average relative delta class. For example, 
for the \emph{December/Office} configuration shown in more detail 
in the bottom panel of Figure \ref{fig:frontiere}, the selected $\gamma$ value was 4.4.The selected $\gamma$ values for the other configurations are reported in Table~\ref{tab:optimal_gamma} in Appendix C.

It is worth to observe that selecting a different $\gamma$ for each configuration is natural for the purpose of comparing the different configurations via simulations, and practically meaningful. Indeed, a charging facility managers in a specific location (e.g., Vietnam) know their specific price profile and expected arrival profile, and they obviously shall tune the controller (i.e., the $\gamma$ value) so to reach the expected target (e.g., 50\% average relative delta class). For this reason, the validation simulations have been performed at ``equal target'' among the nine configurations, and not at ``equal $\gamma$''.
\begin{figure}[t!]
    \centering
    \begin{subfigure}{0.49\textwidth}
        \centering
        \includegraphics[scale=0.43]{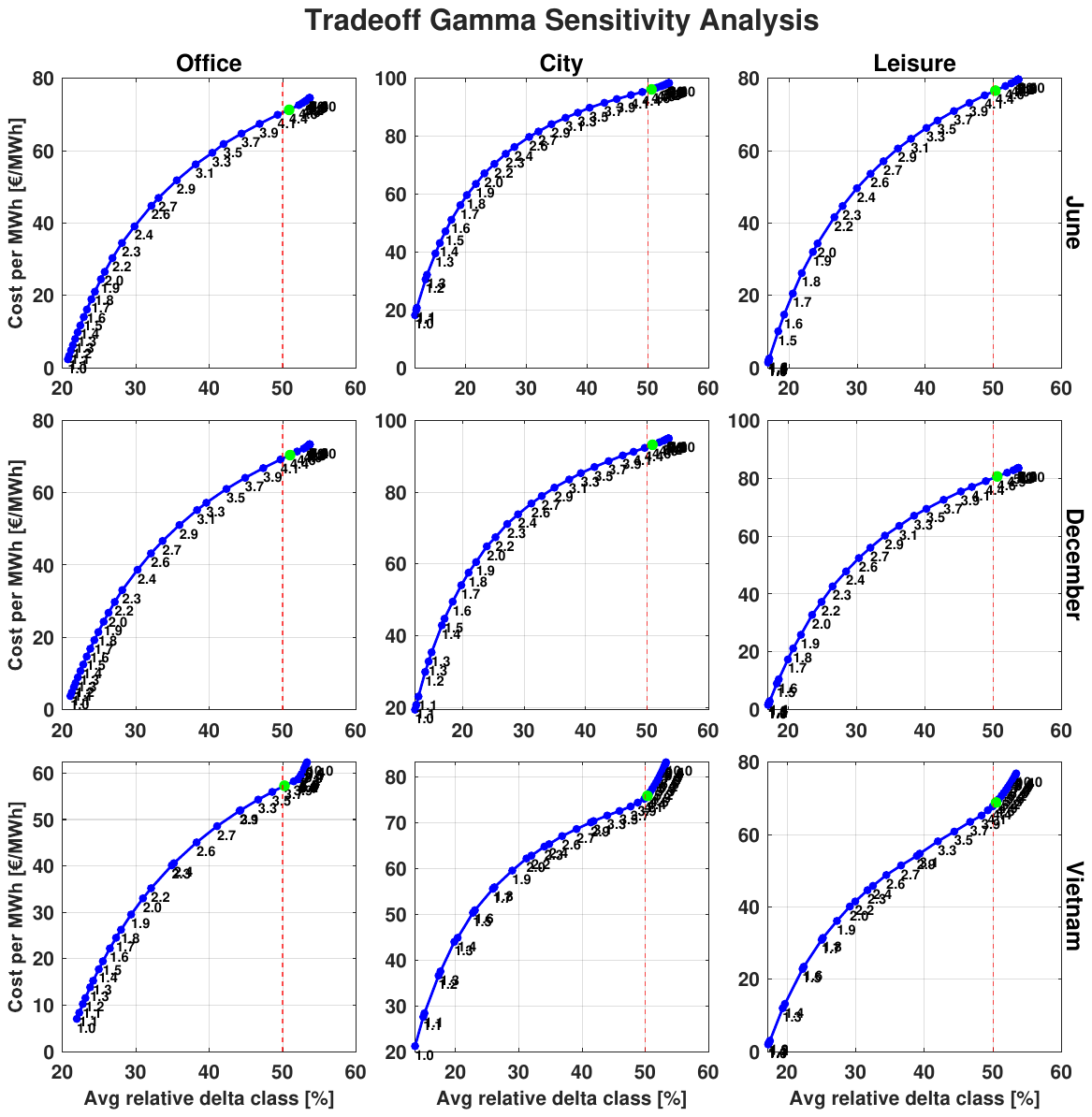}
    \end{subfigure}
    \hfill
    \begin{subfigure}{0.49\textwidth}
        \centering
        \includegraphics[scale=0.48]{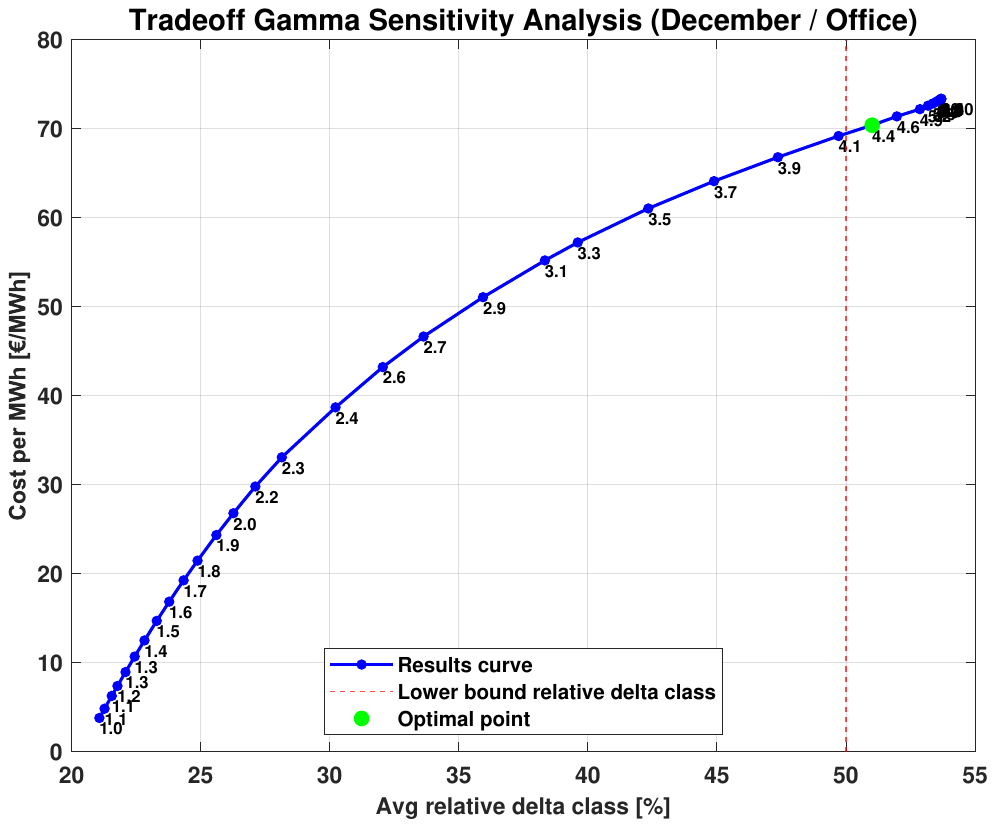} 
    \end{subfigure}
    \caption{Top panel: $\gamma$-calibration results combining the prices \{June, December, Vietnam\} on the rows with the arrivals \{Office, City, Leisure\} on the columns. Bottom panel: zoom into one specific configuration (\emph{December/Office}), selected  $\gamma = 4.4$ shown as a green dot.}
    \label{fig:frontiere}
\end{figure}

\subsection{Statistical validation campaign }\label{sec:validation_campaign}
We next present an extensive dynamic simulation campaign for validation of our proposed approach and comparison with the FIFS baseline. For each of the nine price/arrivals configuration previously described, we simulate 100 independent one-day scenarios with different realizations of the uncertain factors such as PV power production and vehicles arrivals and departures. The arrival processes are identical for both models; however, since departures depend on the class reached under each policy, hence they are control-dependent, the exact number of EVs inside the charging station at any time $\tau$ may differ between our proposed approach and FIFS.
For each configuration and scenario we evaluate our controller deployed in shrinking-horizon and we compare its performance with respect to the FIFS policy. 

\subsubsection{Performance indicators}

The first group of indicators concerns the economic performance of the charging strategy. The key quantity in this respect is the \emph{average price per kWh} (or MWh) delivered.
This metric captures the nominal cost of energy delivery for both the proposed method and the for FIFS baseline, and a relative indicator can also be derived by expressing the optimal controller's cost in relation to the FIFS baseline cost. In this way we can normalize results across different scenarios and better isolate the effects of operational or informational constraints on system performance.
Further cost-related indicators include the total cost over an operation day, for both the proposed controller and the FIFS baseline, and the cost saving, defined as the difference of the previous two quantities.

The second group of metrics, as the objective function of Equation \eqref{eq:mainLPmodel} suggests, is devoted to the quality of service experienced by the EV users. We do not limit our analysis to the computation of the chosen ``proxy'' measure of user satisfaction, i.e., the function $S$ defined in~\eqref{eq:Satisfaction}. Instead, we introduce additional performance indicators to more comprehensively assess both the fairness and the perceived quality of the proposed charging schedules from the users’ perspective.
The main indicator in this group is the \emph{relative delta class}, which is defined, for each EV, as the ratio between the charging cycles completed and the entry class; this measure  is then averaged over all EVs. Accordingly, a value of 100\% indicates that all vehicles have been fully charged, whereas 0\% corresponds to the absence of any charging activity. Additional complementary measures include the percentage of fully charged EVs (i.e. EVs leaving the charging station being in class 0), the percentage of EVs that depart with more than two-thirds of the battery charged (meaning in class $n/3$ or lower) and the waiting time steps before start charging. 

The third group of indicators includes the metrics that characterize the behavior of the power and energy system, which is a useful information even though not explicitly optimized by our controller. These quantities include the total energy delivered during the whole day, divided into the energy provided by the PV and storage system and the total energy purchased from the grid; the maximum power drained from the network at anytime indicates the level of stress on the grid, while the instantaneous maximum and total absolute energy exchanged by the battery over the day indicate how much and how the storage system has been used. These metrics are important to assess both the feasibility and the operational stress imposed by the charging policy on the underlying infrastructure.

\subsubsection{Validation campaign results }\label{sec:validation_campaign_results}

One hundred stochastic realizations are simulated for each of the nine configurations introduced earlier, and for each of them the optimal tradeoff coefficient $\gamma$ (recall the definition of the cost in~\eqref{eq:CostFUnctional}) is selected as described in Figure \ref{fig:100simulations}. Since the actual number of EVs, the departure realizations, and the PV generation profile vary from one scenario to another, the resulting performance exhibits a significant stochastic spread, which is captured by boxplots. Figure \ref{fig:box_savingperkWh} and Figure \ref{fig:box_relative_delta_class} report, respectively, the cost saving per delivered kWh and the relative delta-class difference between our model and FIFS, both expressed in percentage points, while Figure \ref{fig:scatter_performances} shows every simulation point, highlighting the trade-off between these two quantities across all configurations and showing the full cloud of results rather than only aggregated statistics. As usual, the boxplots summarize the variability over the 100 simulations, with the central box containing the middle 50\% of the data and the line inside each box indicating the median.

Overall, the most favorable cost performance is obtained by the \emph{Vietnam/Office} configuration, which yields a 17.5\% cost-per-kWh savings, the largest among all cases. This is consistent with the scatter plot, where the same configuration tends to occupy the upper-left region, indicating high savings but also a relatively larger service distance compared to FIFS. By contrast, the \emph{City} arrival profile generally leads to the lowest and less variable savings, with a 4\% average in the worst case with the \emph{December} price pattern, whereas the \emph{Office} profile consistently offers a good compromise between economic benefit and customer service. The relative delta-class difference remains negative in all configurations, confirming the FIFS approach to be a suitable upper-bound benchmark for this metric; however, for almost every configuration, this degradation is limited and typically remains close to zero, with an average lower than -3\% for five configurations out of nine. This is particularly relevant because FIFS is already a strong benchmark in terms of user satisfaction, as it does not account for peak shaving or cost minimization and simply aims to charge EVs as early and as much as possible. Therefore, a small reduction in service quality is acceptable when it enables the substantial cost savings obtained by our shrinking horizon optimization model. One of the advantages of the proposed method is that it allows the operator to explicitly balance monetary cost savings against customer satisfaction, while FIFS offers no comparable trade-off mechanism because it follows a fixed heuristic policy.

\begin{figure}[t!]
    \centering
    \begin{subfigure}{0.48\textwidth}
        \centering
        \includegraphics[scale=0.42]{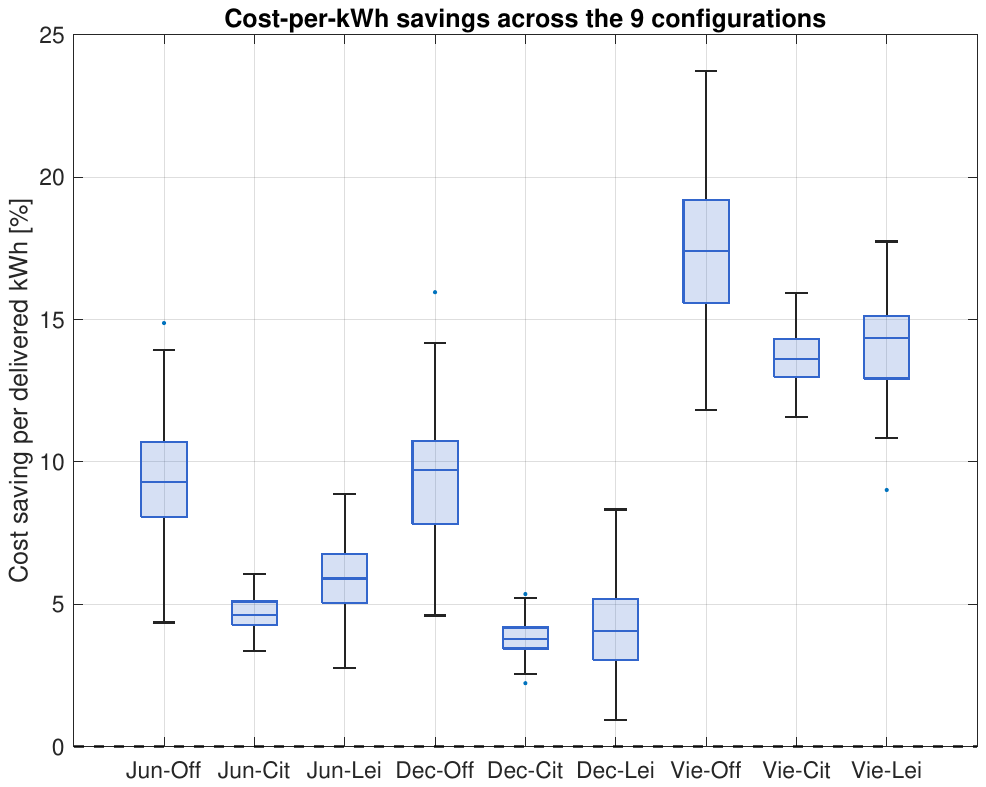}
        \caption{100-simulations cost-per-kWh savings across nine configurations.}
        \label{fig:box_savingperkWh}
    \end{subfigure}
    \hfill
    \begin{subfigure}{0.48\textwidth}
        \centering
        \includegraphics[scale=0.42]{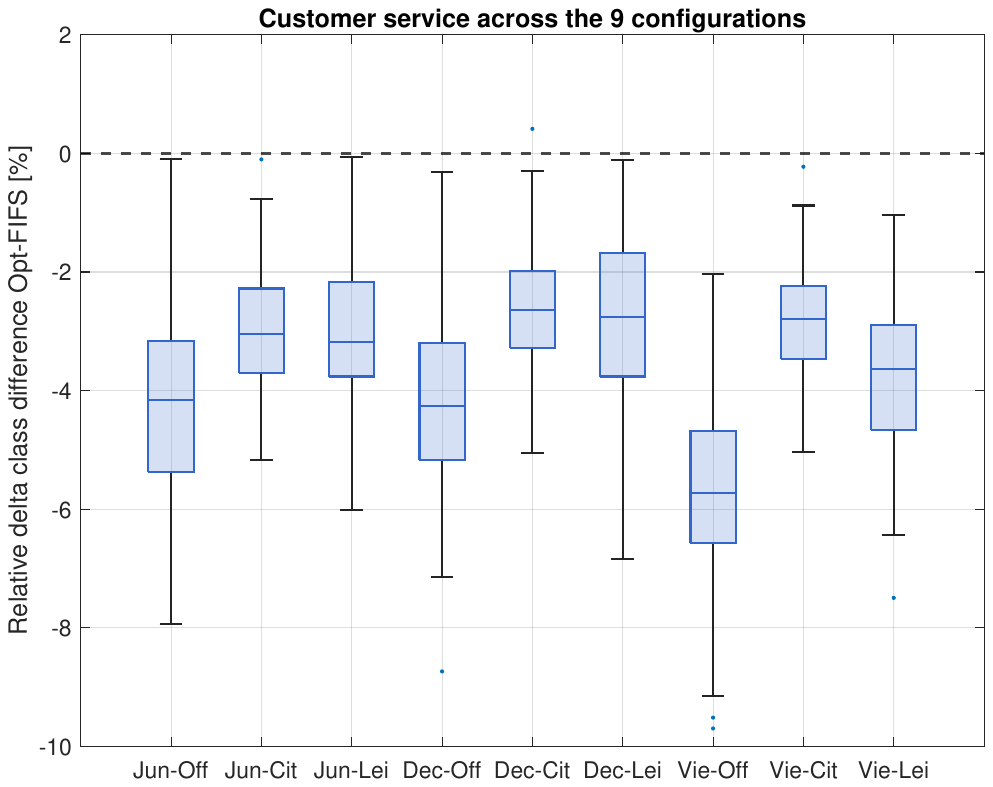} 
        \caption{100-simulations customer service across nine configurations.}
        \label{fig:box_relative_delta_class}
    \end{subfigure}
    \hfill
    \begin{subfigure}{0.48\textwidth}
        \centering
        \includegraphics[scale=0.42]{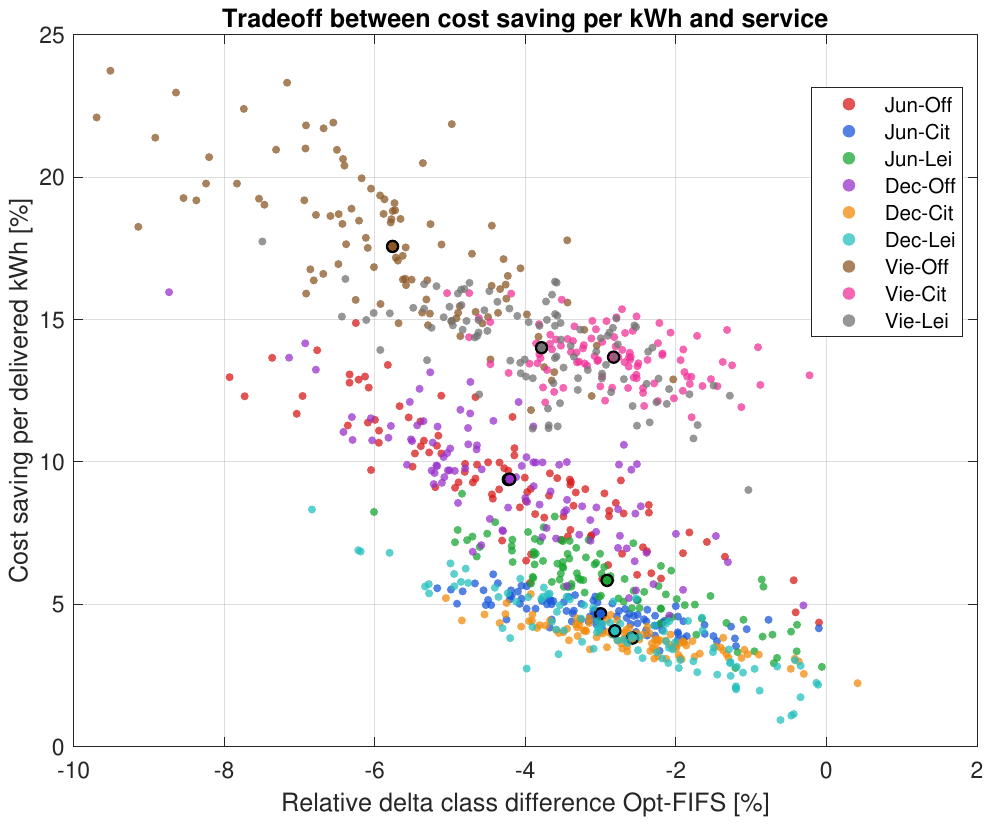} 
        \caption{Tradeoff between cost-per-kWh and customer service: 100-simulations for each configuration.}
        \label{fig:scatter_performances}
    \end{subfigure}
    \caption{Top panel: cost saving per kWh delivered across the nine configurations. Mid panel: customer service across the nine configurations. Bottom panel: tradeoff between cost saving per kWh and customer service.}
    \label{fig:100simulations}
\end{figure}

Table \ref{tab:scenarios} summarizes the average results of key evaluation metrics, for each of the nine configurations considered in our 100-scenario validation campaign, highlighting significant cost saving obtained by our proposed optimization framework compared to the FIFS approach. The reported results are obtained by averaging the performance across all scenarios: the average price-per-kWh entry in Table~\ref{tab:scenarios}, as well as the others indicators, is computed scenario-wise and then averaged over the 100 realizations. An extended version of this table, including more comparison metrics, is presented in Appendix E.

The results are consistent with the aggregate trends shown by the boxplots of Figure \ref{fig:box_savingperkWh} and \ref{fig:box_relative_delta_class}. 
Our proposed optimization approach (Opt) maintains a clear economic advantage over FIFS in every configuration, with only a modest degradation in relative delta class. Across all configurations, Opt consistently reduces both the total cost and the price per kWh with respect to FIFS, with total cost savings ranging from about 9\% to almost 30\%. As we noticed in the previous boxplots, the strongest economic improvements are obtained in the \emph{Vietnam} scenarios,  thanks to a significant difference between peak hours and off-peak hours prices. For what concerns arrival profiles, \emph{Office} appears to be the best in terms of cost saving, with its lowest value being 18.68\% in \emph{June}. This confirms that the proposed strategy successfully exploits the cost-saving potential of the shrinking-horizon optimization while preserving a largely comparable charging service.
%
%
Overall, the results support the effectiveness of the proposed approach as a practical compromise between operating cost and user satisfaction.

\begin{table*}[t!]
\caption{100 scenarios simulation key results: every number in the table is computed by averaging 100 different simulations with different realizations of the uncertain factors (i.e. EVs arrivals, departures and PV production, as seen in Section \ref{subsec:random_quant}). The "Cost saving" and "Saving per kWh" percentages are intended for the Opt model compared to FIFS.}
\label{tab:scenarios}
\centering
\scriptsize
\setlength{\tabcolsep}{2pt}
\renewcommand{\arraystretch}{0.92}
\resizebox{\textwidth}{!}{%
\begin{tabular}{l*{9}{cc}}
\toprule
& \multicolumn{2}{c}{Jun--Off}
& \multicolumn{2}{c}{Jun--Cit}
& \multicolumn{2}{c}{Jun--Lei}
& \multicolumn{2}{c}{Dec--Off}
& \multicolumn{2}{c}{Dec--Cit}
& \multicolumn{2}{c}{Dec--Lei}
& \multicolumn{2}{c}{Vie--Off}
& \multicolumn{2}{c}{Vie--Cit}
& \multicolumn{2}{c}{Vie--Lei} \\
\cmidrule(lr){2-3}
\cmidrule(lr){4-5}
\cmidrule(lr){6-7}
\cmidrule(lr){8-9}
\cmidrule(lr){10-11}
\cmidrule(lr){12-13}
\cmidrule(lr){14-15}
\cmidrule(lr){16-17}
\cmidrule(lr){18-19}
Quantity
& Opt & FIFS
& Opt & FIFS
& Opt & FIFS
& Opt & FIFS
& Opt & FIFS
& Opt & FIFS
& Opt & FIFS
& Opt & FIFS
& Opt & FIFS \\
\toprule
Saving per kWh [\%] & 9.27 & ... & 4.65 & ... & 5.81 & ... & 9.46 & ... & 3.82 & ... & 4.07 & ... & 17.47 & ... & 13.67 & ... & 13.95 & ... \\
Price per kWh [\euro/kWh] & 0.0675 & 0.0744 & 0.0963 & 0.1010 & 0.0779 & 0.0827 & 0.0670 & 0.0740 & 0.0931 & 0.0968 & 0.0826 & 0.0861 & 0.0548 & 0.0664 & 0.0796 & 0.0922 & 0.0765 & 0.0889 \\
Cost saving [\%] & 18.68 & ... & 10.67 & ... & 12.25 & ... & 19.04 & ... & 9.16 & ... & 10.36 & ... & 29.84 & ... & 18.18 & ... & 21.42 & ... \\
\midrule
Relative delta class [\%] & 52.02 & 56.27 & 53.76 & 56.76 & 53.28 & 56.20 & 51.83 & 56.04 & 54.07 & 56.65 & 53.45 & 56.26 & 50.45 & 56.21 & 53.79 & 56.62 & 52.54 & 56.32 \\
Fully charged EVs [\%] & 21.64 & 22.25 & 22.06 & 22.53 & 21.90 & 22.27 & 21.83 & 22.06 & 22.24 & 22.52 & 22.00 & 22.40 & 21.40 & 22.35 & 22.00 & 22.43 & 21.69 & 22.33 \\
\midrule
Total energy deliv. [MWh] & 13.36 & 14.90 & 29.01 & 30.97 & 16.84 & 18.08 & 13.31 & 14.91 & 29.13 & 30.86 & 16.97 & 18.17 & 12.63 & 14.86 & 29.23 & 30.85 & 16.57 & 18.14 \\
Max $P_{\text{ntw}}$ reached [MW] & 1.96 & 2.20 & 2.05 & 2.06 & 2.20 & 2.19 & 1.97 & 2.20 & 2.08 & 2.04 & 2.19 & 2.19 & 2.05 & 2.20 & 2.19 & 2.05 & 2.17 & 2.19 \\
\bottomrule
\end{tabular}
}
\end{table*}

A final interesting point is that Opt achieves cost  savings without relying on a dramatic increase in grid stress. The maximum network power in many cases is lower than the FIFS, which often reaches the $\Pmax = 2.2$ MW budget limit. Furthermore, the energy exchanged by the battery (see Table~\ref{tab:scenarios_big} in Appendix E) shows that the storage system is used by Opt much more than FIFS, which naively use PV production as a first choice. Summarizing, the proposed strategy constantly provides a well-balanced trade-off: it does not outperform FIFS in pure service metrics, but it offers a much more attractive operating point when cost and energy efficiency are taken into account.

\subsection{Comparison for a sample one-day scenario}\label{sec:one_day_scenario}
The previous section showed the potential of our proposed shrinking-horizon optimization approach by evaluating its performance over 100 stochastic simulations for each price-arrival configuration. We provide in Figure \ref{fig:single_sim_plots} a more detailed view of the system dynamics through a one-day representative sample simulation, based on the parameter set in Section~\ref{sec:Parameters_setting} and summarized in Table~\ref{tab:parameters}. We next focus on  the \emph{December/Office} configuration as a case study.

\begin{figure*}[t!]
\centering
\begin{subfigure}[t]{0.47\textwidth}
    \centering
    \includegraphics[width=\linewidth]{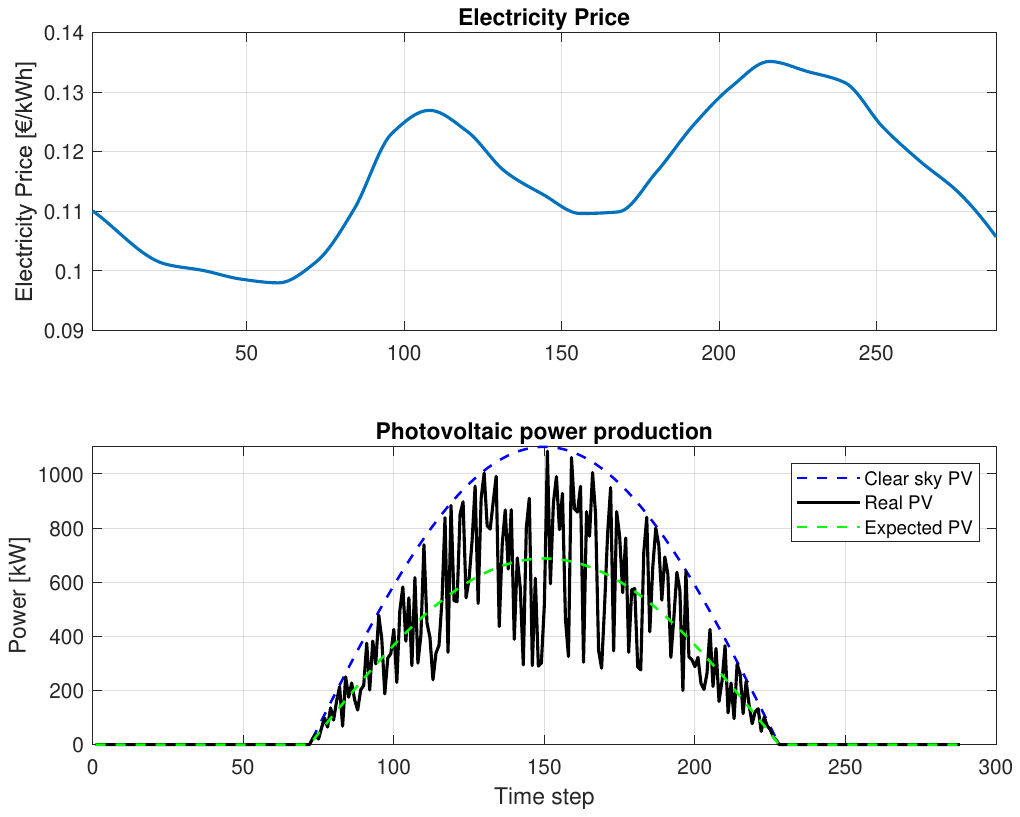}
    \caption{Electricity price (\emph{December} pattern) and PV power production.}
    \label{fig:price_PVproduction}
\end{subfigure}
\hfill
\begin{subfigure}[t]{0.47\textwidth}
    \centering
    \includegraphics[width=\linewidth]{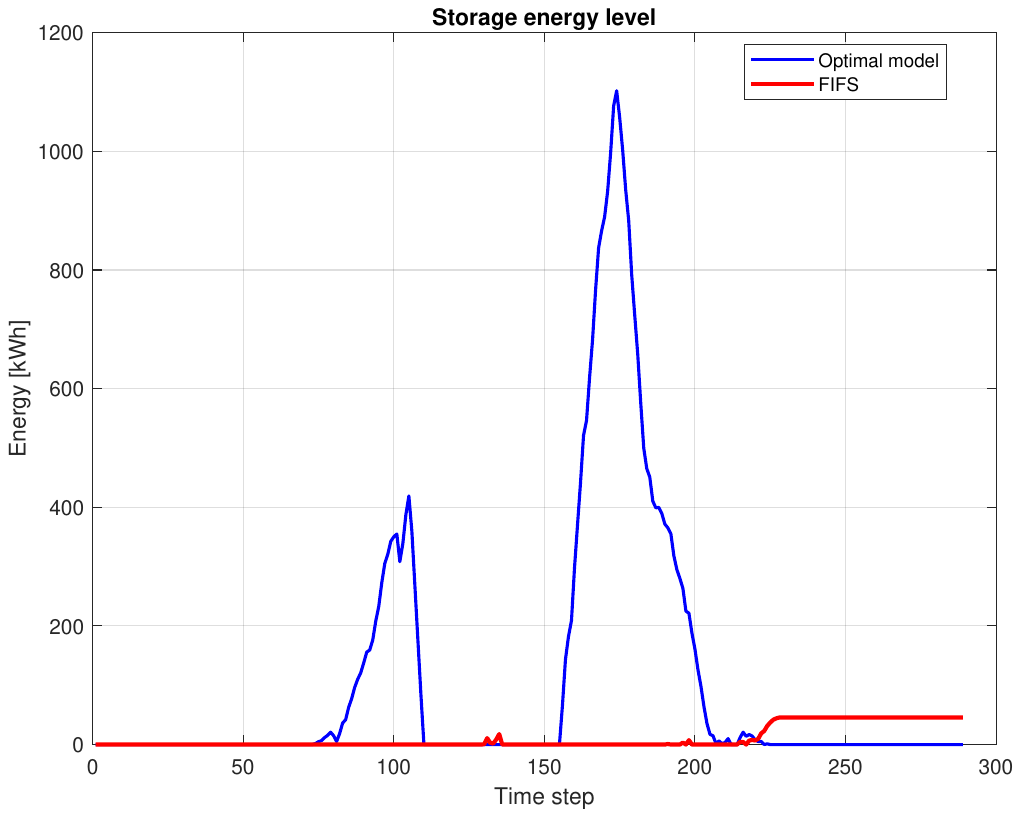}
    \caption{Battery storage energy level Opt vs FIFS.}
    \label{fig:storage_level}
\end{subfigure}

\vspace{0.3cm}

\begin{subfigure}[t]{0.47\textwidth}
    \centering
    \includegraphics[width=\linewidth]{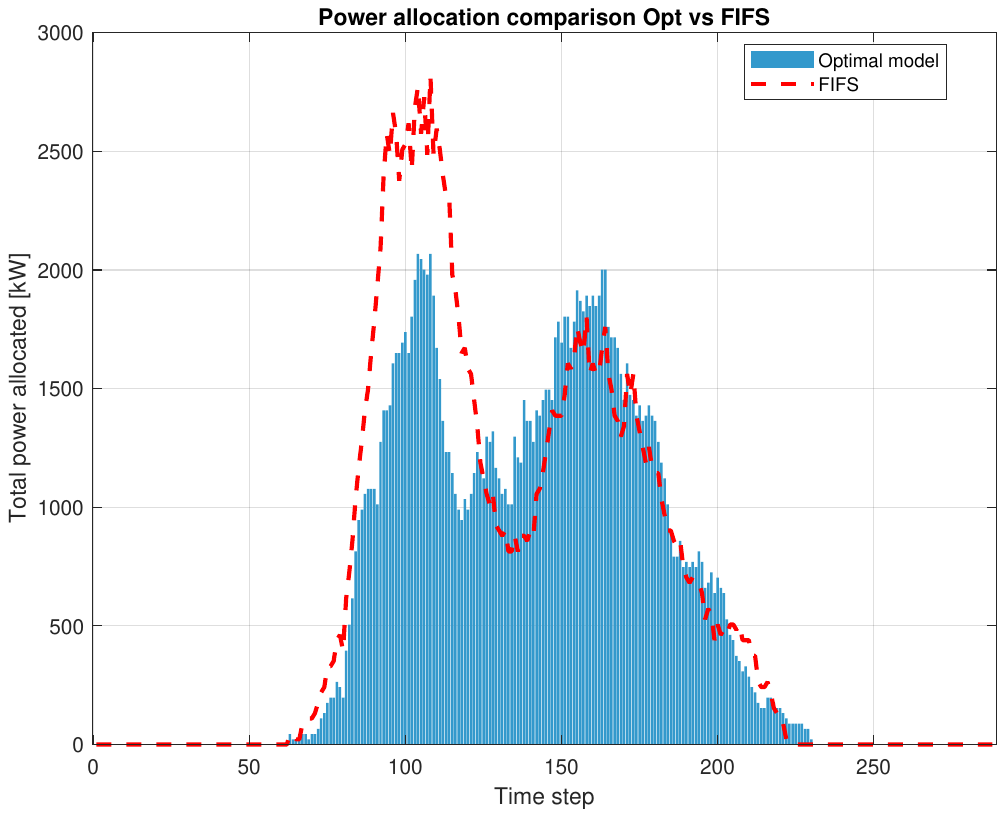}
    \caption{Total power allocation comparison Opt vs FIFS.}
    \label{fig:power_alloc_comparison}
\end{subfigure}
\hfill
\begin{subfigure}[t]{0.47\textwidth}
    \centering
    \includegraphics[width=\linewidth]{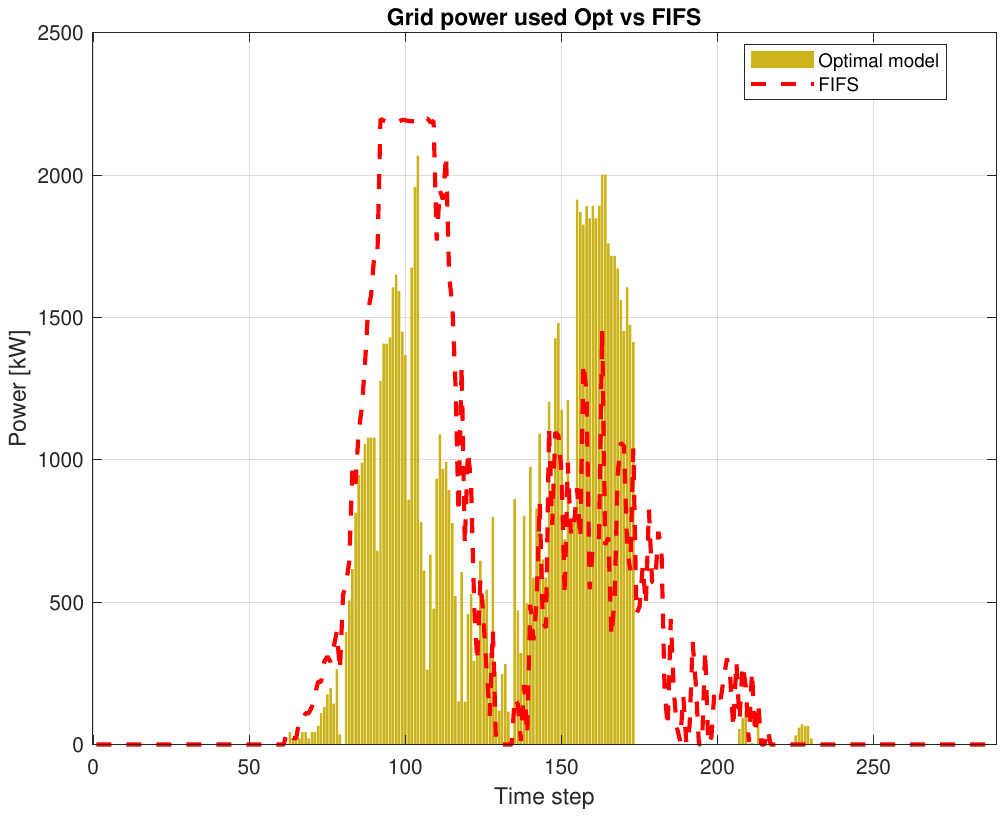}
    \caption{Power bought from the network grid Opt vs FIFS.}
    \label{fig:grid_used_comparison}
\end{subfigure}
\caption{Power system behavior of a representative one-day scenario of the \emph{December/Office} configuration.}
\label{fig:single_sim_plots}
\end{figure*}

Figure~\ref{fig:price_PVproduction} presents the averaged electricity price for \emph{December} 2025 in Italy and the PV power production over the day. The latter compares the actual PV output (black) with the clear-sky profile (dashed blue) and the average expected production (green), thereby highlighting the weather-driven variability that our forecasting layer is designed to address. Figure \ref{fig:storage_level} reports the battery energy trajectories of both FIFS and the proposed controller, showing how our approach stores PV surplus during high-generation, low-price periods and later exploits it within the same 24-hour horizon. In this way, any PV energy that is not immediately used for EV charging is temporarily stored in the battery and then reused later, reducing the need for additional grid purchases. The blue bar plot in Figure~\ref{fig:power_alloc_comparison} compares the total power allocation (grid plus clean sources) under the proposed model and FIFS. The optimal controller reduces power usage during the morning hours, when electricity prices are relatively high, and delays charging by a few hours to exploit a local price minimum. This scheduling shift also alleviates grid stress, lowering the peak network power from the 2200~kW observed under FIFS to approximately 2000~kW for a short interval in the optimal case. Later in the day, the proposed model allocates power more economically than FIFS, achieving a better overall performance, especially in this \emph{Office}-inspired occupancy scenario. Around midday, Figure \ref{fig:grid_used_comparison} shows that both strategies charge EVs mostly using PV generation, as reflected by the fact that grid energy purchases drop to nearly zero. However, the proposed controller increases grid purchases shortly after time step 150, when electricity prices become more favorable, while simultaneously beginning to store PV energy, as shown in the corresponding battery trajectory. Grid purchases then cease after time step 170, once electricity prices rise again, confirming that the proposed controller shifts part of the charging load away from expensive periods and uses PV and storage more strategically.

In this specific simulation, the proposed optimal controller delivers a substantial cost reduction, lowering the total bill from \euro1132.10 under FIFS to \euro937.27, i.e., a 17.21\% cost saving. This gain is mirrored by the reduction in the average cost per kWh, from 0.0733 to 0.0665~\euro/kWh. Importantly, this economic improvement comes at the price of only a limited reduction in service quality, with the average relative delta class dropping by just 2.60 percentage points, from 56.34\% to 53.74\%. This single-day example is representative but only illustrative; the stronger performance claims, based on the paired 100-scenario campaign, have already been settled in Section \ref{sec:validation_campaign}.

\subsection{Robustness under forecast uncertainty}
\label{subsec:robustness_simulations}

To assess the behaviour of the proposed robust LP formulation introduced in~Section~\ref{sec:uncrob}, we perform a test under forecast uncertainty. 
In particular, the nominal prediction profiles of the photovoltaic production, vehicle arrivals, and departure probabilities are perturbed within the confidence intervals as described in Section~\ref{subsec:robustness_ambiguity}.

For this evaluation example, we assume that the user has only interval-based knowledge of the expected values for PV production forecasts, EV arrivals and departures, consistently with the setting of Section~\ref{sec:uncrob}. Choosing the previously introduced \emph{December/Office} configuration as the reference scenario, these quantities are assumed at first to vary within a confidence interval of $\pm 20\%$ around such corresponding nominal values, and afterwards we enlarge the uncertainty interval up to $\pm 50\%$. In each simulation run, the expectations of the corresponding realizations are sampled as uniformly distributed random variables within their respective intervals.
We  then compare three control strategies: (a) the nominal LP from Equation~\eqref{eq:mainLPmodel}, where the user optimistically assumes to have the \emph{exact} knowledge of the aforementioned nominal expectations, corresponding to the midpoint of the confidence intervals; (b) the robust worst-case LP formulation in Equation~\eqref{eq:mainLPmodelWORSTCASE}; and (c) the FIFS baseline introduced in Section~\ref{sec:FIFS}. Both the nominal and robust controllers are implemented with $\gamma = 7$.

For each strategy and confidence interval width, we run 50 Monte Carlo realizations and collect the corresponding values of three key performance indicators: cost per delivered kWh, relative delta class, and total delivered energy. The resulting variability boxplots are reported in Figure~\ref{fig:robust_boxplots}.

\begin{figure}[t!]
    \centering

    \begin{subfigure}{0.49\textwidth}
        \centering
        \includegraphics[scale=0.43]{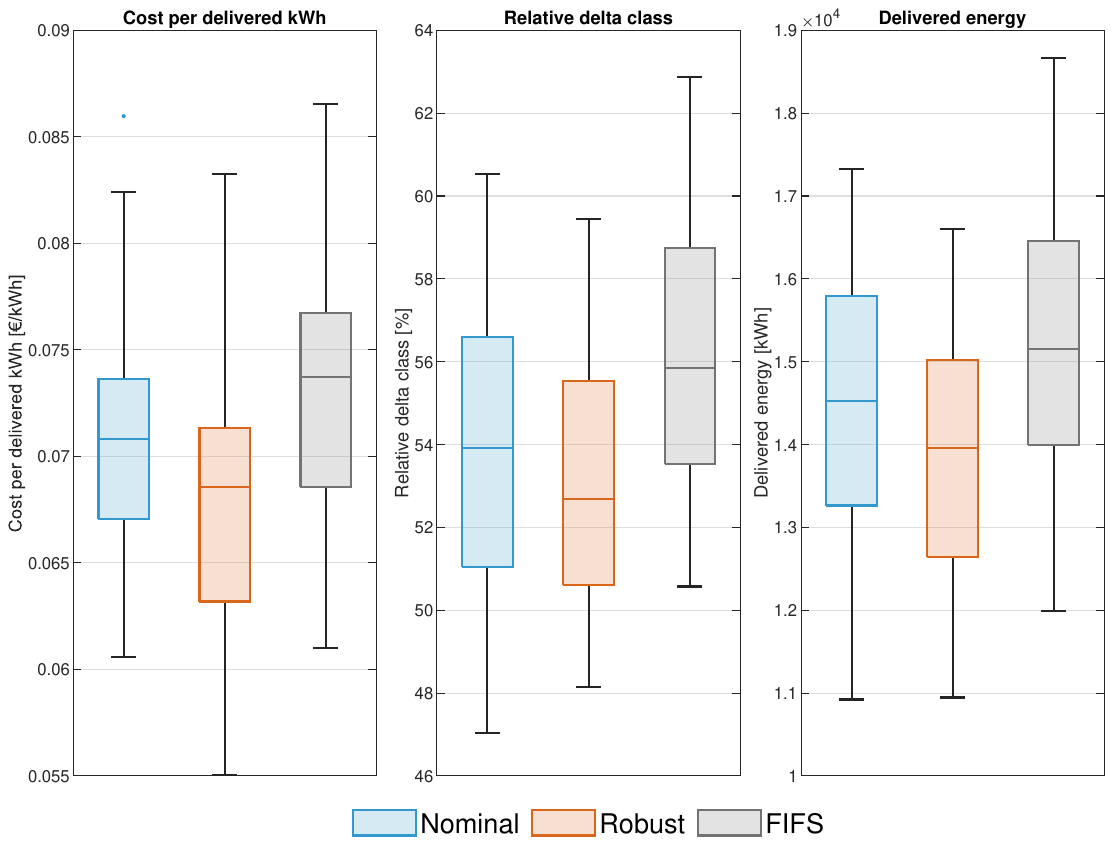}
        \caption{Results with $\pm 20\%$ confidence intervals around nominal uncertain values of EVs arrivals, departures and PV production.}
        \label{fig:robust_boxplot_20}
    \end{subfigure}

    \vspace{0.2cm}

    \begin{subfigure}{0.49\textwidth}
        \centering
        \includegraphics[scale=0.43]{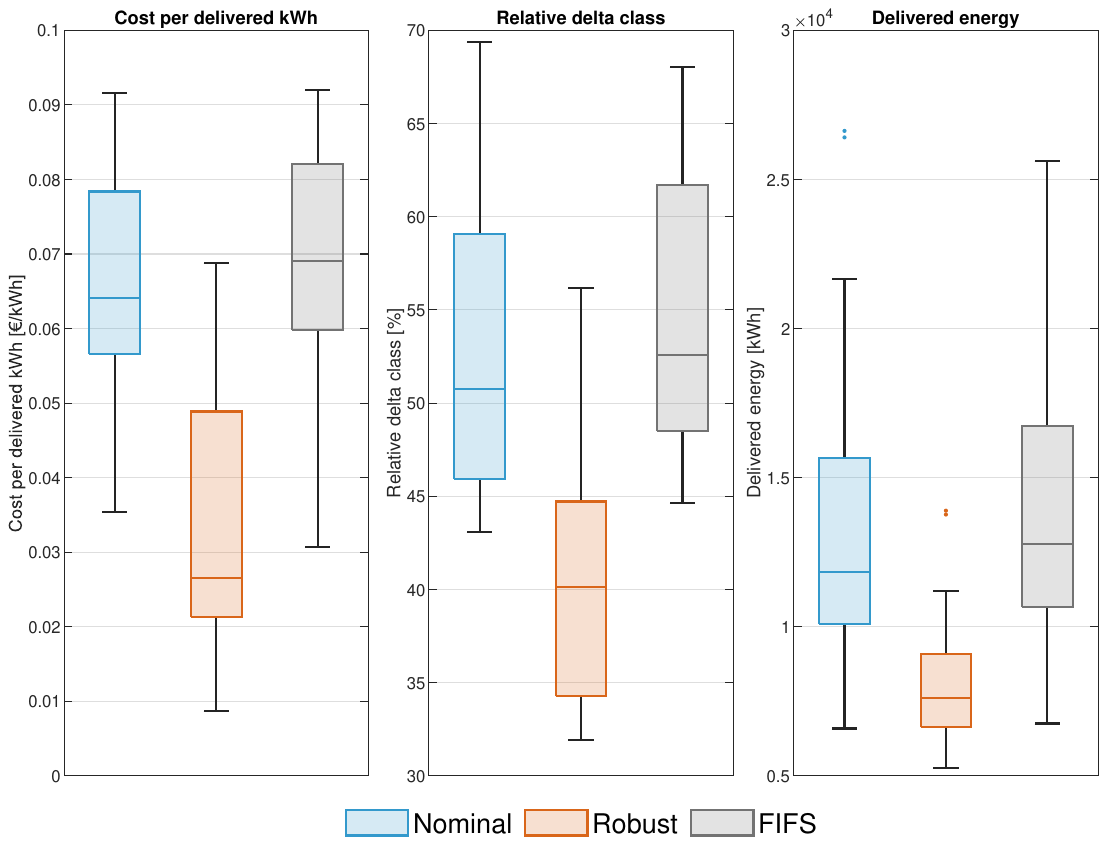}
        \caption{Results with a larger $\pm 50\%$ confidence intervals around nominal uncertain values of EVs arrivals, departures and PV production.}
        \label{fig:robust_boxplot_50}
    \end{subfigure}

    \caption{Boxplots of results obtained with 50 simulations each, with different realizations of the uncertain factors discussed in Section \ref{subsec:random_quant}.}
    \label{fig:robust_boxplots}
    
\end{figure}

The results confirm that the robust controller provides a balanced compromise between performance and conservativeness. Compared with the nominal solution, the robust approach at 20\% uncertainty level yields on average a lower cost per delivered kWh, as expected from a design tailored to a less-EVs-case scenario, along with only a marginal reduction in the average relative class delta, with the medians in Figure \ref{fig:robust_boxplot_20} differing by just one percentage point. This is consistent with the idea that a “robust user” is solving the same problem as the “nominal user”, but with less available information on future behavior of PV production, arrivals and departures. In fact, when solving the optimization problem \eqref{eq:mainLPmodelWORSTCASE}, the robust controller forecasts the future parking occupancy to be lower than it actually turns out to be, leading it to make decisions that favor purchasing less energy from the grid at the present time, which may ultimately prove suboptimal in terms of service level. 


As the uncertainty on the input factors increases, e.g., with a $\pm 50\%$ uncertainty interval on the expectations, the differences among the controllers become more pronounced. 
The boxplots in Figure~\ref{fig:robust_boxplot_50} show that  the robust controller now substantially reduces the median cost per delivered kWh, from about 0.065 \euro/kWh for the nominal controller to 0.027 \euro/kWh,
that is a 58.5\% reduction.
However, also the relative delta class decreases, although only by  about 10 percentage points with respect to the nominal controller. The overall delivered energy
also decreases, which helps alleviate stress on the power grid while maintaining a more conservative and resilient operating strategy.

These experiments highlight a key trade-off under uncertainty: the robust controller achieves a lower cost by adopting a more conservative energy purchasing strategy, while still maintaining an acceptable service level, at least at the the 20\% uncertainty level, as seen in Figure \ref{fig:robust_boxplot_20}. This behavior makes it a particularly attractive choice when economic performance must be balanced against uncertainty in EV arrivals and departures. Overall, these simulations show that the proposed robust LP is effective in preserving feasibility and stable performance under forecast errors, while keeping the reduction in nominal performance limited.

\subsection{Parametric sensitivity analysis}\label{sec:sensitivity_analysis}
In this subsection, we investigate how the system responds to variations in selected model parameters, considering some of the values  reported in Table~\ref{tab:parameters} as base parameters to be perturbed. In every plot in this section, the results corresponding to each value of the analyzed parameter are averaged over 15 different stochastic simulations. This sensitivity analysis provides further insight into the role played by the main design choices. For simplicity, in the following we focus again the configuration with Italian electricity prices from \emph{December} and \emph{Office} EVs arrival pattern.

Figure \ref{fig:sens_Pntw} shows the sensitivity of the trade-off between cost saving and customer service as the maximum network power budget $\Pmax$ changes. As expected, even a modest increase in the maximum network power budget improves the cost saving, from about 16\% at the default value $\Pmax = 2200$~kW to roughly 25--30\% when $\Pmax$ reaches 2500~kW. Conversely, when $\Pmax$ is reduced, the optimization model loses flexibility in exploiting cost-saving opportunities, but becomes increasingly closer to FIFS in terms of average relative delta class, and in some cases even outperforms it. This behavior is consistent with a smarter allocation of power across user classes under a tighter power budget, where the controller must redistribute the limited capacity more selectively. The average curve clearly shows that tighter power limits force the controller to prioritize service, whereas looser limits allow more aggressive economic cost optimization. Overall, the plot confirms that $\Pmax$ acts as a key tuning parameter governing the balance between economic benefit and user satisfaction.

    \begin{figure}[t!] 
    \centering
    \includegraphics[scale=0.49]{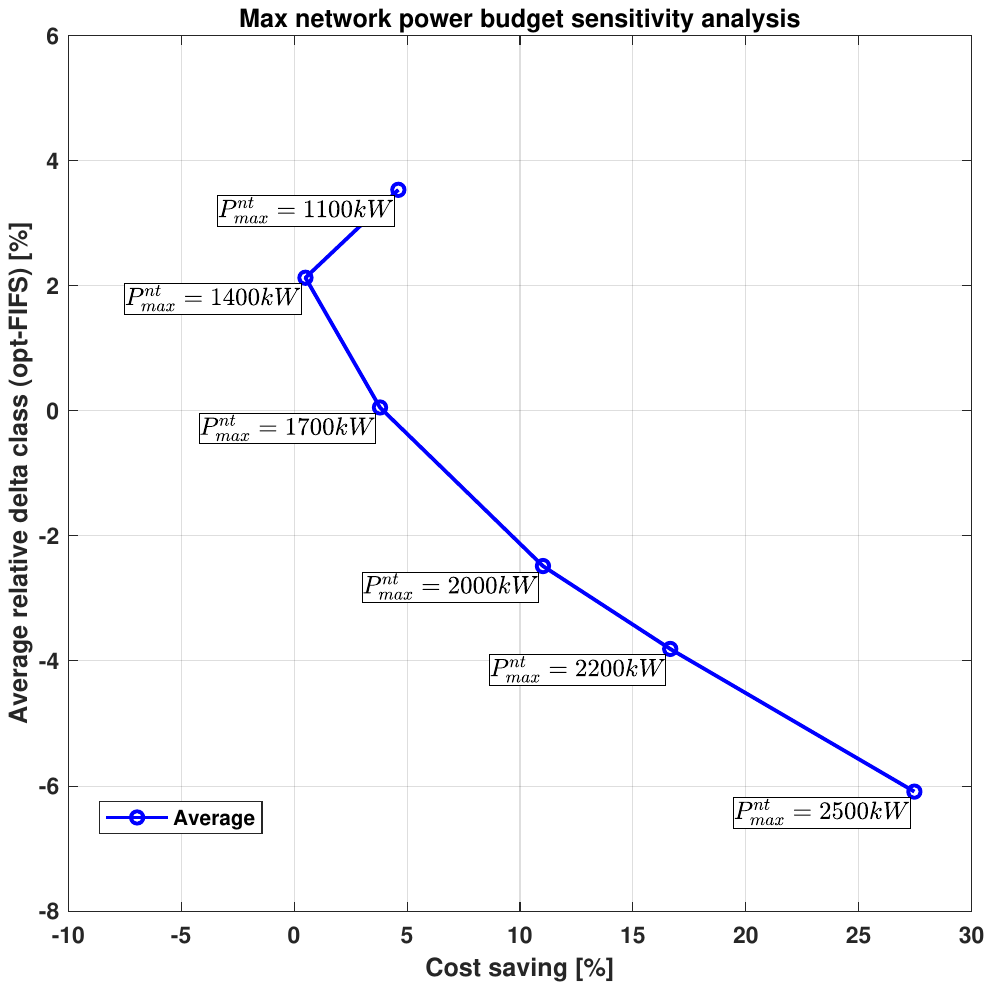} 
    \caption{Sensitivity analysis of the trade-off between cost savings and customer service as a function of the maximum network power budget $\Pmax$, averaged across 15 different stochastic scenarios.}
    \label{fig:sens_Pntw}
    \end{figure}

Following the same idea of exploiting congestion inside the charging station, Figure \ref{fig:sens_mult} reports a further analysis on the arrival intensity $\lambda_i(t)$ being scaled by a variable coefficient, while keeping all other model parameters fixed. As expected, the trade-off between economic performance and customer service is strongly affected by the number of EVs entering the system: increasing the arrival intensity tends to reduce the achievable cost saving, since the controller has less flexibility to shift charging away from expensive periods, while the average relative delta class difference compared to FIFS improves instead, proving once again that our optimal model manages to keep high standards in customer service satisfaction even when the system is congested. Conversely, reducing $\lambda_i(t)$, by applying a coefficient lower than 1, makes the charging scheduling problem less congested and allows the controller to exploit cost-saving opportunities more effectively, leading to larger economic gains up to 30-40\% compared to FIFS. Overall, the proposed strategy remains effective and reasonably robust under significant variations in the stochastic arrival process, confirming its ability to cope with uncertainty in demand volume.

    \begin{figure}[t!] 
    \centering
    \includegraphics[scale=0.49]{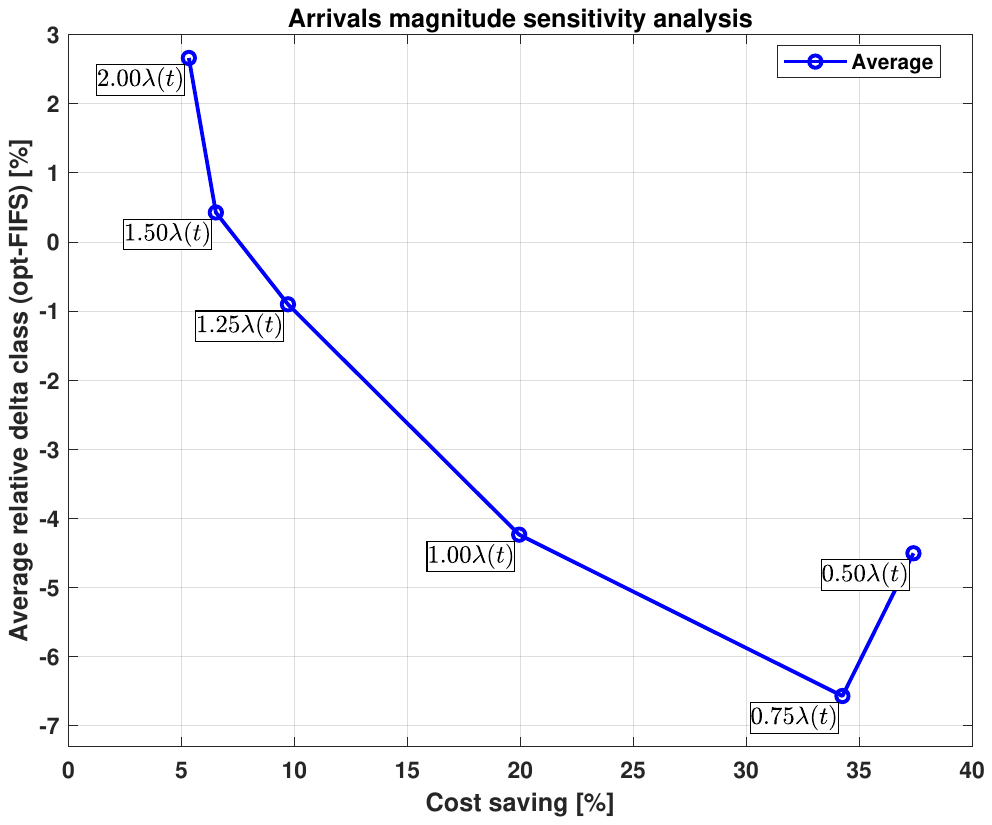} 
    \caption{Sensitivity analysis of the intensity of EVs arrival signal: $\lambda(t)$, for the configuration of \emph{Office}, is multiplied by a coefficient.}
    \label{fig:sens_mult}
    \end{figure}

These studies addressed the potential congestion at the charging station and examined how service quality and cost savings change as the parking system approaches saturation or becomes less constrained. Next,  we investigate how the energy exchanged by the system varies when parameters related to the PV production forecast or the battery storage capacity are modified. 

Figure~\ref{fig:sens_smax} shows the effect of varying the maximum storage capacity $s_{max}$. FIFS is largely insensitive to this parameter, since it always uses the available PV energy as the primary source for charging and stores it only when production exceeds the immediate charging needs; in fact, we can see that the total energy delivered by FIFS across different values of $s_{max}$ only fluctuates in an approximate 150kWh span. By contrast, the proposed controller exploits larger storage capacity more effectively: it can purchase more energy from the grid during low-price periods, store the surplus PV generation, and use it later in the day. As a result, a larger storage system enables the controller to deliver more energy to the EVs over the course of the day, achieving potentially higher cost-saving and customer satisfaction.

Figure~\ref{fig:sens_w} reports the sensitivity of the proposed shrinking horizon framework with respect to the parameter $w$, which models the uncertainty in the PV production forecast in Equation \eqref{eq:PV_production_expectation}. Smaller values of $w$ correspond to a more accurate prediction and less variability of the daily PV generation profile, and this translates into a clear improvement in the cost-saving performance up to 45\% compared to FIFS. In particular, the total energy purchased from the grid decreases monotonically as $w$ is reduced, moving from about 10 MWh when $w=1$ to roughly 3.6 MWh when $w=0$, while the cost saving increases from around 7\% to nearly 47\%. When $w$ is higher and we have bigger uncertainty in the PV production, our dynamic controller is led to buy more energy from the network grid to charge EVs and keep high service level standards, since the quantity of PV power generation for future hours is more uncertain. This confirms that our optimal controller is able to exploit more reliable PV forecasts more effectively, by planning charging actions in a way that better anticipates the available renewable energy.

    \begin{figure}[t!] 
    \centering
    \includegraphics[scale=0.48]{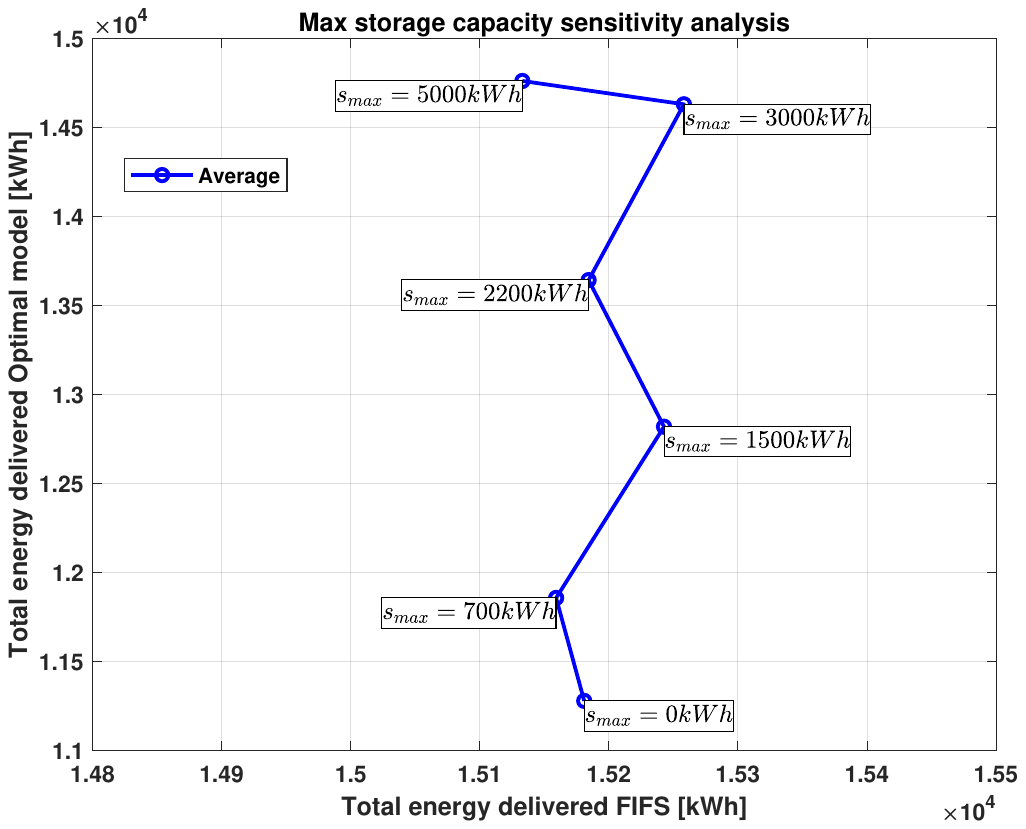} 
    \caption{Sensitivity analysis of the storage maximum capacity $s_{max}$, with values ranging from 0 to 5000 kWh; models behavior comparison Opt vs FIFS.}
    \label{fig:sens_smax}
    \end{figure}

    \begin{figure}[t!] 
    \centering
    \includegraphics[scale=0.48]{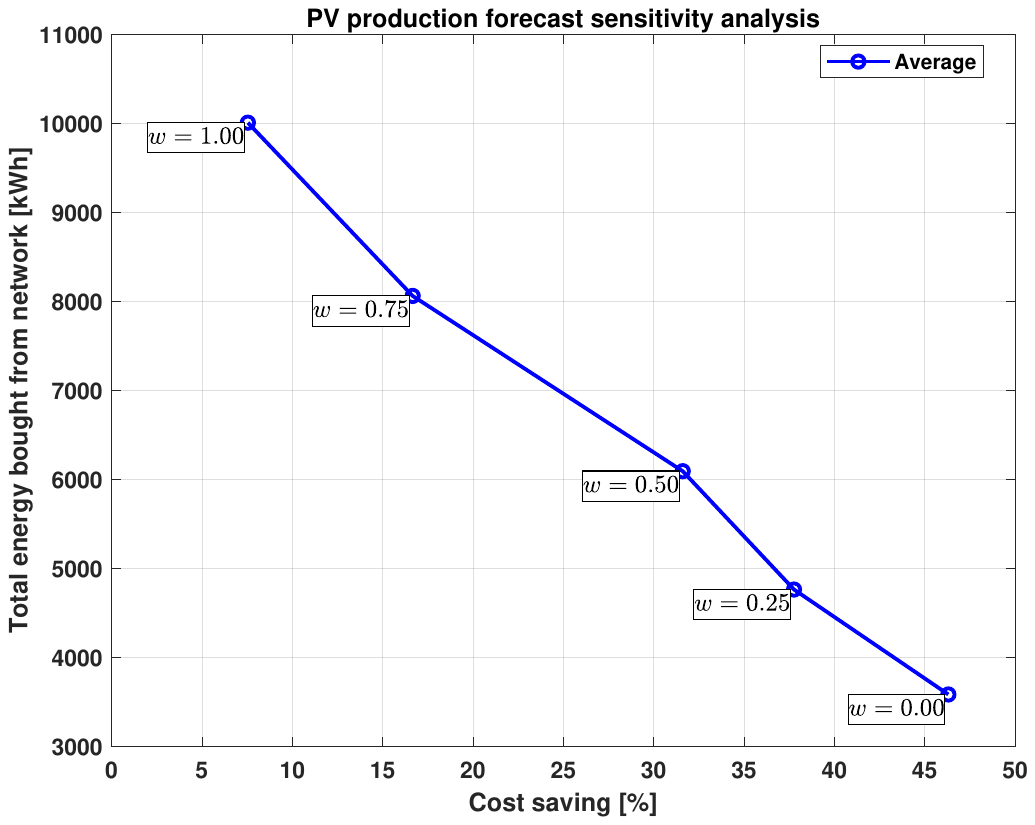} 
    \caption{Performances metrics of the proposed optimization model with respect to variations in the parameter $w$ related to the PV power production uncertainty (see Equation \eqref{eq:PV_production_expectation}).}
    \label{fig:sens_w}
    \end{figure}

Overall, the sensitivity analysis results confirm the adaptability of the proposed optimization framework under varying system conditions. The controller effectively adjusts to changes in key parameters, balancing cost efficiency and service quality. It consistently exploits additional flexibility when available while maintaining good performance under tighter constraints. This demonstrates its reliability and effectiveness in realistic, uncertain operating scenarios.


\section{Conclusions}\label{sec:Conclusions}
This paper introduced a stochastic smart-charging framework for EV charging stations equipped with PV generation and battery storage. The key modeling idea is to aggregate vehicles into classes defined by their residual charging demand, which leads to a novel compact representation that remains tractable even when the number of vehicles is large. On top of this class-based stochastic model, we formulated an expectation-based finite-horizon control problem which has the form of a linear program, and we deployed it dynamically in shrinking-horizon fashion. We also discussed a worst-case counterpart that preserves feasibility under interval ambiguity in the first moments of the underlying stochastic processes.

From a technical viewpoint, the paper shows that the aggregate arrival-charge-departure model is positive and integer-consistent, that the coupled energy-management problem can be expressed within a linear-programming framework, and that a robust implementation can be obtained without abandoning tractability.

From a practical and numerical viewpoint, the numerical study clarifies the behavior of the controller under two complementary settings: a reconstructed benchmark close to~\cite{Hermans2024}, which highlights the difference between peak-oriented and cost-oriented objectives, and a  large-fleet extensive simulation setup
oven nine realistic price/arrival configurations. The numerical evidence supports three technically sound conclusions:

\begin{itemize}[leftmargin=*]
    \item The shrinking-horizon LP is computationally well structured and can be instantiated on realistic station sizes thanks to the class-based representation; the optimization dimension depends on the number of classes chosen rather than on the number of vehicles. In the small-fleet benchmark reconstructed from~\cite{Hermans2024}, the proposed controller exhibits the expected economic behavior: compared with a peak-oriented batch MPC, it  concentrates charging demand in fewer intervals when this decreases the average price of purchased energy. Conversely, the batch MPC achieves substantially lower peaks because peak shaving is embedded directly in its objective. This confirms that the two controllers solve genuinely different control problems.
    
    \item The large-fleet validation campaign confirms the superiority of the proposed framework in balancing economic efficiency and user satisfaction. By simulating 900 stochastic scenarios across nine distinct configurations of electricity prices and arrival patterns, the analysis  demonstrates that the controller consistently outperforms the service-greedy, price-agnostic FIFS baseline. Specifically, the proposed approach achieves total cost savings ranging from 10\% to nearly 30\% and cost-per-kWh reductions of up to 17.5\%. Crucially, these substantial economic gains are attained with only a marginal reduction in pure service quality, proving that the algorithm successfully navigates the Pareto trade-off between operating costs and user satisfaction.
    
    \item Finally, the robustness example in Section~\ref{subsec:robustness_simulations} and the parametric sensitivity analysis in Section~\ref{sec:sensitivity_analysis} highlight the resilience of the proposed approach to forecast uncertainty, as well as the potential performance gains that could be achieved with more accurate predictions of the uncertain inputs. Tightened grid budgets or variations in PV forecasts naturally steer the model to prioritize safe service or aggressive cost-saving, validating the framework's operational adaptability.
\end{itemize}

Future research will focus, from the modeling point of view, on refining the proposed model to better capture real-world scenarios. From a numerical perspective, exploring alternative methods to mitigate the suboptimality induced by discretization represents a promising direction for further investigation.

\appendix

\section{}
\noindent\textbf{Proof of Lemma~\ref{lemma:DiscretizationSteo}.}
The equality constraint $P^0 \one\tran \wt c(\tau)  = \wt p\ped{ntw}(\tau) + \wt p\ped{phq}(\tau)$, follows by definition of  $\wt p\ped{phq}(\tau)$ in~\eqref{eq:PphqMigliorato}. The nonnegativity of $\wt p\ped{ntw}(\tau)$ is straightforward from its definition in~\eqref{eq:PntwMigliorato}. Moreover, it can be seen that
\begin{equation}\label{eq:Inequalityppedntw}
\wt p\ped{ntw}(\tau)\leq  p^\star\ped{ntw}(\tau).
\end{equation}
If $\wt p\ped{ntw}(\tau)=0$ this is trivially true, otherwise we have  $\wt p\ped{ntw}(\tau)=P^0 \one\tran \wt c(\tau)  -  p^\star\ped{phq}(\tau)\leq P^0 \one\tran  c^\star(\tau)  -  p^\star\ped{phq}(\tau)= p^\star\ped{ntw}(\tau)$.
The inequality $\wt p\ped{ntw}(\tau) \leq  \Pmax$ is then satisfied since $p^\star\ped{ntw}(\tau) \leq  \Pmax$.
Moreover, we also have
\begin{equation}\label{eq:InequalityppednPHQ}
\wt p\ped{phq}(\tau)\leq  p^\star\ped{phq}(\tau).
\end{equation}
since by definition
\[
\begin{aligned}
\wt p\ped{phq}(\tau)&= P^0 \one\tran \wt c(\tau)  -  \wt p\ped{ntw}(\tau)=\min\{p^\star\ped{phq}(\tau) ,P^0 \one\tran \wt c(\tau)\}\\&\leq  p^\star\ped{phq}(\tau).
\end{aligned}
\]
Let us now reason by cases on the charge variable $\wt q(\tau)$. \\ \emph{(If $q^\star(\tau)\leq  \wt p\ped{phq}(\tau) - \bar P\ped{ph}(\tau)$):} In this case it is easy to see that $\wt q(\tau)=q^\star(\tau)$ since $q^\star(\tau)=\frac{\bar{s}(t)-\bar{s}(t+1)}{\Delta}\geq \frac{\bar{s}(t)-s\ped{max}}{\Delta}$ and $q^\star(\tau)\geq -q\ped{max}$. Then, in this case, the inequalities 
\[
0\leq \wt p\ped{phq}(\tau)\leq \bar P\ped{ph}(\tau) + \wt q(\tau)\quad \text{ and }\quad |\wt q(\tau)| \leq q\ped{max}, 
\]
are satisfied, since by~\eqref{eq:InequalityppednPHQ} we have
\[
0\leq \wt p\ped{phq}(\tau)\leq  p^\star\ped{phq}(\tau)\leq \bar P\ped{ph}(\tau) + q^\star(\tau)= \bar P\ped{ph}(\tau) + \wt q(\tau)
\]
and $|q^\star(\tau)|\leq q\ped{max}$.\\
\emph{(If $q^\star(\tau)>  \wt p\ped{phq}(\tau) - \bar P\ped{ph}(\tau)$):} In this case, by definition we have 
\[
\wt q(\tau) = \max \Big\{  \wt p\ped{phq}(\tau) - \bar P\ped{ph}(\tau), -q\ped{max},\; \frac{\bar s(\tau) - s\ped{max}}{\Delta}\Big\}
\]
and thus 
\[
\wt q(\tau) \geq  \wt p\ped{phq}(\tau) - \bar P\ped{ph}(\tau)\;\;\Rightarrow \;\;\wt p\ped{phq}(\tau) \leq  \bar P\ped{ph}(\tau)+\wt q(\tau).
\]
Since $|q^\star(\tau)|\leq q\ped{max}$ and $q^\star(\tau)=\frac{\bar{s}(t)-\bar{s}(t+1)}{\Delta}\geq \frac{\bar{s}(t)-s\ped{max}}{\Delta}$ we also have that $|\wt q(\tau)|\leq q\ped{max}$, as required.\\
The inequality 
\[
0\leq \wt c(\tau)\leq x(\tau)
\]
follows by the fact that $0\leq  c^\star(\tau)\leq x(\tau)$ and $\wt c(\tau)=\lfloor c^\star(\tau) \rfloor\in \N^{n+1}$. 
The second part of the statement straightforwardly follows by Lemma~\ref{lemma:Preliminary} and by the fact that $\frac{\bar{s}(t)-s\ped{max}}{\Delta}\leq \wt q(\tau)\leq q^\star(\tau)$.
\qed
\section{}

\noindent\textbf{Proof of Proposition~\ref{prop_ineqversion}.}
Let $\mathcal J(\cdot)$ denote the objective function of problems in~\eqref{eq:mainLPmodel} and~\eqref{eq:mainLPmodel_ineq}. Since the feasible set of~\eqref{eq:mainLPmodel} is contained in the feasible set of~\eqref{eq:mainLPmodel_ineq}, one always has
\[
p^\star_{rel,\tau}\le p^\star_\tau.
\]
We prove the two statements in turn.

\emph{Proof of Item~2.}
Assume $\gamma>0$ and $\beta_i>0$ for all $i=0,\dots, n$. Let
$C^\star,Q^\star,P^\star_{\mathrm{ntw}},P^\star_{\mathrm{phq}},X^\star,S^\star$
be an optimal solution of~\eqref{eq:mainLPmodel_ineq}. We show that the state inequalities must hold with equality.

Assume by contradiction that they do not. Let $\tau_1\in\{\tau,\ldots,T\}$ be the earliest time at which a strict inequality appears, namely either
$\bar x^\star(\tau_1)\lneqq x(\tau)$ if $\tau_1=\tau$,
or
\[
\bar x^\star(\tau_1)\lneqq \bar A(\tau_1-1)\bigl[\bar x^\star(\tau_1-1)+Bc^\star(\tau_1-1)\bigr]+\bar a(\tau_1-1)
\;
\]
if $\tau_1>\tau$.
Define $w\doteq x(\tau)$ if $\tau_1=\tau$, or 
\[
w\doteq \bar A(\tau_1-1)\bigl[\bar x^\star(\tau_1-1)+Bc^\star(\tau_1-1)\bigr]+\bar a(\tau_1-1),  
\]
if $\tau_1>\tau$.
Choose the smallest index $j\in\{0,\ldots,n\}$ such that $\bar x_j^\star(\tau_1)<w_j$. Now define a modified state  by
\[
\tilde x_i(\tau_1)=\begin{cases}
\bar x_i^\star(\tau_1), & i\neq j,\\
w_j, & i=j,
\end{cases}
\]
At all other times $\tau< \tau_1$ (if any) we keep the same state and input variables.

By construction, all constraints at time $\tau_1$ remain feasible, since 
$c^\star(\tau_1)\le x^\star(\tau_1) < \tilde{x}(\tau_1)$. 
If $\tau_1<T$, the forward state is then modified inductively by defining
\[
\wt x(t+1)=  \bar A(t)\bigl[\tilde x(t)+Bc^\star(t)\bigr]+\bar a(t),\;\;\forall t\in \{\tau_1+1,\dots, T\}. 
\]
By monotonicity of the average system proven in~Lemma~\ref{lemma:Preliminary} (recall equation~\eqref{eq:monoticity}), we have that
\[
\tilde x(t)\geq \bar x^\star(t)\geq c^\star(t), \;\;\forall t\in \{\tau_1+1,\dots, T\}.
\]

Hence we have constructed another feasible solution of~\eqref{eq:mainLPmodel_ineq} with the same variables as the optimum except the state variable that satisfy
$\tilde x(\tau_1)\gneqq\bar x^\star(\tau_1)$ and $\tilde x(t)\geq\bar x^\star(t)$, for all $t\in \{\tau_1+1,\dots T\}$.  Since $\gamma>0$ and $\beta>0$, this implies
\[
\begin{aligned}
\mathcal J(C^\star,\tilde X, \,Q^\star,S^\star,P^\star_{\mathrm{ntw}},P^\star_{\mathrm{phq}})&= \hspace{-0.1cm}\sum_{t=\tau}^{T-1}\bar \pi_{t} \Delta \bar p^\star\ped{ntw}(t)\hspace{-0.05cm} - \hspace{-0.05cm}\gamma \sum_{t=\tau}^{T}\beta \tilde x(t)< \sum_{t=\tau}^{T-1}\bar \pi_{t} \Delta \bar p^\star\ped{ntw}(t) - \gamma \sum_{t=\tau}^{T}\beta \bar x^\star(t)\\
&=
\mathcal J(C^\star,X^\star,Q^\star,S^\star,P^\star_{\mathrm{ntw}},P^\star_{\mathrm{phq}}),
\end{aligned}
\]
contradicting optimality. Therefore every optimal solution of~\eqref{eq:mainLPmodel_ineq} must satisfy the $x$-dynamics with equality, which proves Item~2.\\
\emph{Proof of Item~1.}
Assume now only $\gamma\ge 0$ and $\beta_i\ge 0$. Start from an arbitrary optimal solution of~\eqref{eq:mainLPmodel_ineq}. Repeating the slack-removal construction used above whenever a strict inequality is present in the $x$-dynamics, we obtain another optimal solution of~\eqref{eq:mainLPmodel_ineq} that satisfies
\[
\bar x(t+1)=\bar A(t)\bar x(t)+\bar a(t)+\bar A(t)Bc(t),\qquad \bar x(\tau)=x(\tau).
\]
The cost function cannot increases during this procedure, as proven above.
Therefore we have constructed an optimal solution of~\eqref{eq:mainLPmodel_ineq} that is also feasible for~\eqref{eq:mainLPmodel}. This yields $p^\star_\tau\le p^\star_{rel,\tau}$.
Combined with the opposite inequality proved at the beginning, we conclude that
$p^\star_{rel,\tau}=p^\star_\tau$,
which proves Item~1.
\qed
\section{}
In this appendix, we present the parameter choices defining the additional functions and stochastic processes involved in the simulations performed in Section~\ref{sec:NumericalSimulation}.
\begin{itemize}[leftmargin=*]
    \item \textbf{Instantaneous maximum power $P^0$:} using 22 kW three- phase alternating current (AC) charge points is appropriate for European public and workplace settings because the majority of publicly accessible chargers in the European Union (EU) are still AC and most standard AC posts are installed up to 22 kW with Type 2 connectors mandated across Europe. Recent EU/EAFO reporting \cite{EAFO_RechargingSystems_2021} shows that only about one in eight public chargers is “fast” direct current (DC), implying that the vast remainder are AC, typically up to 22 kW; this makes 22 kW a representative and policy‑aligned assumption for modeling public/semipublic charging \cite{ACEA_OneInEightFast_2024}.
    
    \item \textbf{Entry class probability $p_i$:} the probability of vehicles arriving in each class, denoted as $p_i$ (see \eqref{eq:arrival_expectation}), is assumed to be time-invariant under the assumption that no EV arrives fully charged, i.e., $p_0 = 0$. The exact values are computed based on a real-world dataset from Shenzhen, China \cite{Shenzen_data}. This dataset was collected from 30 real public EV charging stations covering thousands of EVs; we extrapolated the EVs State of Charge (SoC) from the dataset, divided into $n=30$ sets as shown in Figure \ref{fig:class_proportions}.

    \begin{figure}[h!] 
    \centering
    \includegraphics[width=0.49\textwidth]{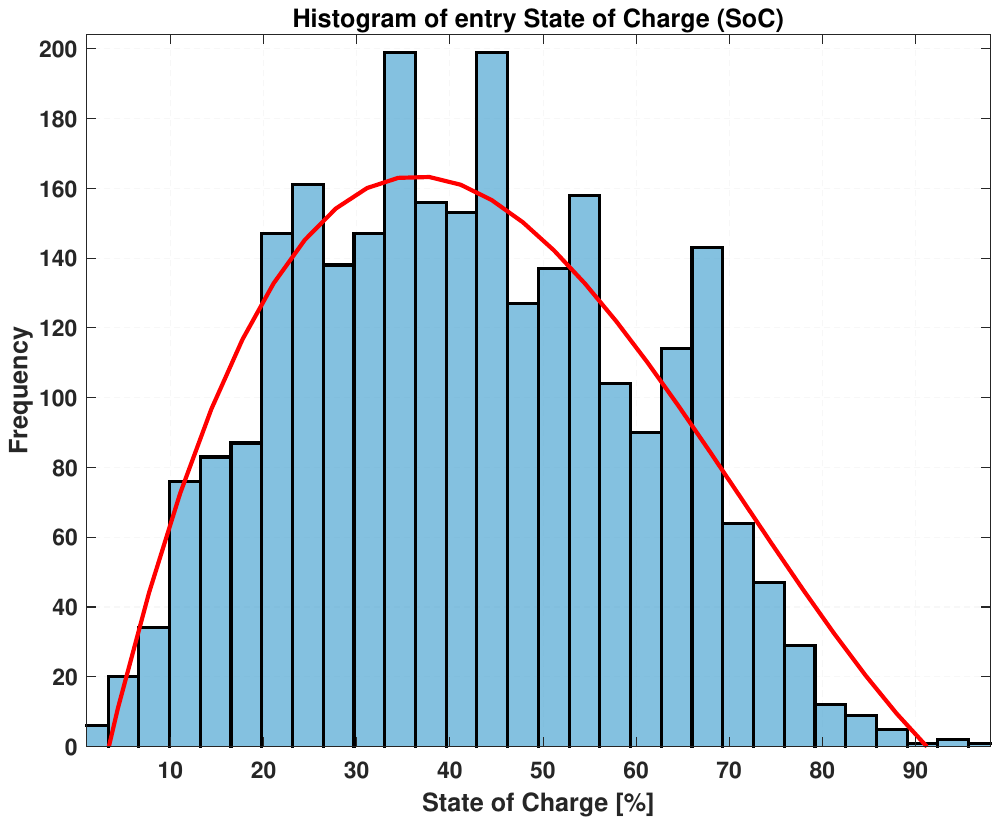} 
    \caption{SoC values obtained from the dataset in \cite{Shenzen_data}.}
    \label{fig:class_proportions}
    \end{figure}

  The class probabilities for our model were obtained at first by fitting the actual collected SoC, as shown by the red curve in Figure \ref{fig:class_proportions}, and then flipped and normalized such that $\sum_i p_i = 1$. The flipping is done so that higher classes correspond to lower SoC values, since the idea is that empty EVs (with low SoC) require more charging cycles and therefore should be assigned to higher-value classes.

    \item \textbf{Departure rates $\alpha_i(t)$:} these rates express the probability of vehicles in class $i$ departing at time $t$. They can be estimated from empirical dwell time distributions. Histogram-based analysis of past charging sessions can be used to derive time-varying departure rates.\\
    At first, we suppose to model vehicle departures in discrete time with a constant per-step departure probability \(\alpha\). At each time step of length \(\Delta\) (in hours), a vehicle independently leaves the station with probability \(\alpha\). Let \(K\) denote the number of time steps that a vehicle remains in the station before departure. Under this assumption, \(K\) is a geometrically distributed random variable with parameter \(\alpha\), and its expectation is $\mathbb{E}[K] = \frac{1}{\alpha}$.
    The corresponding expected parking time is \[ \mathbb{E}[T] = \mathbb{E}[K]\times \Delta = \frac{\Delta}{\alpha}. \] If we want the average parking duration to be equal to a desired value \(k\) (in hours), we can invert the above relation and obtain the constant departure probability 
    \begin{equation} 
    \alpha = \frac{\Delta}{k}. 
    \label{eq:alpha_from_k} 
    \end{equation} 
    For example, with a time-step length \(\Delta = 5\) minutes (i.e., \(\Delta = 1/12\) hours) and a target average parking time \(k = 3\) hours, we get \(\alpha = (1/12)/3 \approx 0.028\), meaning that at each time step a vehicle has a probability of about \(2.8\%\) of leaving the station.

    In practice, departure behavior is class-dependent, in the sense that EVs that are almost fully charged, or fully charged, are more likely to depart sooner than others. Therefore, following the above reasoning about $\alpha$, we set a value for the departures parameter $\alpha_i (t)$ such that $\alpha_n (t) \approx0.03, \forall t$, and the values grow when moving toward lower classes. Specifically, we used $\alpha_i(t) = 0.15 \times \alpha_i^{\mathrm{base}}, \forall t$, where  
    \begin{equation}
    \alpha_i^{\mathrm{base}} = \frac{n+1-i}{n+2}, \quad i = 0,1,\dots,n.
    \label{eq:alpha_base_exp}
    \end{equation}
However, it is worth noting that the model can also accommodate other types of dependence in the departure probabilities, if desired.

    \item \textbf{Storage maximum $s_{\mathrm{max}}$:} this parameter represents the maximum energy capacity of the stationary battery system. The stationary battery is generally sized to provide one or a few hours of peak shaving for the aggregated EV charging demand, consistently with existing sizing studies of PV--storage-supported EV charging stations \cite{Liu2020_OptimalSizingPVStorageEV}. This choice allows the charging infrastructure to buffer short-term fluctuations in load and PV generation, while keeping investment costs within a realistic range for commercial applications. In addition, the storage capacity is sufficient to shift a significant share of the daily energy consumption away from high-price periods, which is in line with typical design guidelines for behind-the-meter battery systems. For all these reasons, we choose $s_{\mathrm{max}} = 2.2$ MWh, which corresponds to  one hour of full-power station operation at $\Pmax = 2.2$ MW.

    \item \textbf{Photovoltaic pattern $P_{ph}(t)$ and $q_{\mathrm{max}}$:} these represent the PV pattern and the maximum power exchangeable with the storage system at any time step. The PV generation profile over 24 hours is modeled as a normalized clear-sky curve with a single midday peak and zero generation at night. In the large-fleet case study we choose a peak PV power equal to $\Pmax/2 = 1.1$ MW, i.e., a sizable but still clearly limited on-site resource compared with the contracted grid power. The clear-sky synthetic photovoltaic power profile $\hat P_{ph}(t)$ is modeled using a concave sinusoidal function as
    \begin{equation}
    \hat P_{ph}(t) =
    \begin{cases}
    0,
    & \rule{-.4cm}{0cm}{t\Delta < t_{\mathrm{start}} \ \text{or}\ t\Delta > t_{\mathrm{end}}}, \\[4pt]
    {\frac{\Pmax}{2}}
    \sin\!\Bigl(
      \dfrac{\pi\,({t\Delta - t_{\mathrm{start}}})}{t_{\mathrm{end}} - t_{\mathrm{start}}}
    \Bigr),
    & {t_{\mathrm{start}} \le t\Delta \le t_{\mathrm{end}}},
    \end{cases}
    \label{eq:pv_hat}
    \end{equation}
    where $t_{\mathrm{start}} = 6$ and $t_{\mathrm{end}} = 19$ denote the beginning and end
    of the PV generation window (in hours). A stochastic realization of the PV power $P_{ph}(t)$ is then obtained by
    subtracting a random uniform-distributed attenuation factor $\xi_t \sim U(0,w \hat P_{ph}(t))$ (see \ref{subsec:random_quant}), where we chose $w = 0.75$; using this configuration, we have that $P_{ph}(t) = \hat P_{ph}(t) - \xi_t$. This sinusoidal irradiance profile captures the main diurnal pattern of solar production while remaining simple enough to be integrated into an optimization framework with discrete time steps; this is in line with standard PV performance models~\cite{Leone2023_PVBatterySizingUFCS}. For a recent comprehensive review of methods for modeling clear-sky irradiation curves, the reader is referred to~\cite{Antonaz-Torres19}.

    By combining this PV profile with the assumed battery size, the model can realistically represent the interaction between local renewable generation, storage operation, and EV charging demand over a typical day. The maximum exchangeable power with the storage system is set to $q_{\mathrm{max}} = 1.1$ MW, corresponding to a 0.5C charge/discharge rate for a $2.2$ MWh battery.
    
    \item \textbf{Initial state $x(0)$ and $s(0)$:} the number of EVs (for each class) inside the charging station and the battery charge at the beginning of the simulation horizon. For simplicity, and since we are considering a 24 hours time span, we set $x_i(0) = 0, \forall i$ and $s(0) = 0$ kWh.

    \item \textbf{Customer satisfaction weights $\beta_i$:} these parameters reflect the priority given to different vehicle classes in the objective function. They can be chosen according to policy goals (e.g., prioritize fast charging, or prioritize high-demand customer). In Section \ref{Sec:SmartChargingControlProblem}, we considered 
    non-increasing $\beta_i$,  such that $\beta_0 = 1, \beta_i = b^i ,\forall 1 \leq i \leq n$ with $0 < b < 1$. The upper limit for $b \rightarrow 1$ is the constant vector $\beta = [1,\dots,1]$ which basically would give no information about any class preferences, while the lower limit for $b \rightarrow 0$ is the vector $\beta = [1,0,\dots,0]$, which pushes EVs towards class 0, but gives again no hint about class preferences, since all classes (except class 0) would be penalized in the same way. In our simulations we settled for the intermediate choice $b=0.5$.

    \item \textbf{Tradeoff parameter $\gamma$:} In Section \ref{sec:Pareto analysis}, we described the procedure used to select the optimal $\gamma$ values for each price-arrival pair. Table~\ref{tab:optimal_gamma} reports the optimal $\gamma$ values adopted in the campaign validation for each configuration, as illustrated in Figure \ref{fig:frontiere}.

\end{itemize}

\begin{table}[t!]
  \caption{Tradeoff parameter $\gamma$ shown in Figure \ref{fig:frontiere}.}
    \centering
    \begin{tabular}{|l|c|c|c|}
    \hline
    \textbf{Optimal $\gamma$} & {Office} & {City} & {Leisure} \\
    \hline
{June} & 4.4 & 4.6 & 4.3 \\
    \hline
    {December} & 4.3 & 4.5 & 4.5 \\
    \hline
  {Vietnam} & 3.6 & 4.3 & 4.4 \\
    \hline
    \end{tabular}
    \label{tab:optimal_gamma}
\end{table}

\section{}
The FIFS algorithm is implemented as follows.

\vspace{1em}
\hrule
\vspace{0.5em}
\captionof{algorithm}{First-In-First-Served FIFS charging with PV-first energy management}\label{alg:FIFS}
\vspace{0.5em}
\hrule
\vspace{0.5em}
\begin{algorithmic}[1]
\State \textbf{Initialization:} set the storage state $s(0)$ and initialize the global FIFO queue $\mathcal Q(0)$ of connected EVs.
\For{$t=0,\ldots,T-1$}
    \State Let $\mathcal U(t)=\{v\in\mathcal Q(t):k(v)\ge 1\}$ be the unfinished EVs, ordered by arrival time
    \State $m_t \gets \min\!\left\{|\mathcal U(t)|,\,
    \left\lfloor\frac{\Pmax + P_{\mathrm{ph}}(t)+\min\!\left\{q_{\max},\,\frac{s(t)}{\Delta}\right\}}{P^0}\right\rfloor\right\}$
    \State Select $\mathcal S(t)$ as the first $m_t$ EVs in $\mathcal U(t)$ (oldest arrivals first)
    \State Set
    $\displaystyle
    c_i(t)\gets \bigl|\{v\in\mathcal S(t):k(v)=i\}\bigr|$, $i=1,\dots,n$,
    and $c_0(t)\gets 0$
    \State Compute the charging demand $\displaystyle P_c(t)\gets P^0\sum_{i=1}^{n} c_i(t)$
    \Statex \textit{PV-first power allocation}
    \State $p_{\mathrm{ph}}(t)\gets \min\{P_{\mathrm{ph}}(t),\,P_c(t)\}$
    \State $R(t)\gets P_c(t)-P_{\mathrm{ph}}(t)$
    \If{$R(t)\ge 0$}
        \State $q(t)\gets \min\!\left\{R(t),\,q_{\max},\,\frac{s(t)}{\Delta}\right\}$
        \State $p_{\mathrm{ntw}}(t)\gets R(t)-q(t)$
    \Else
        \State $q(t)\gets -\min\!\left\{-R(t),\,q_{\max},\,\frac{s_{\max}-s(t)}{\Delta}\right\}$
        \State $p_{\mathrm{ntw}}(t)\gets 0$
    \EndIf
    \State Update the storage state $s(t+1)\gets s(t)-q(t)\Delta$
    \Statex \textit{Class update after charging}
    \For{each $v\in\mathcal S(t)$}
        \State $k(v)\gets k(v)-1$
    \EndFor

    \Statex \textit{Departures after the class transition}
    \State Let $\mathcal Q^{+}(t)$  be the queue after the class update
    \For{each vehicle $v\in\mathcal Q^{+}(t)$}
        \State Remove $v$ from $\mathcal Q^{+}(t)$ with probability $\bar\alpha_{k(v)}(t)$
    \EndFor

    \Statex \textit{Arrivals become available only at the next step}
    \State Let $\mathcal A(t)$ be the list of arrivals observed during $\mathcal I_t$, ordered by arrival time
    \State $\mathcal Q(t+1)\gets \mathrm{concat}\!\bigl(\mathcal Q^{+}(t),\,\mathcal A(t)\bigr)$
\EndFor
\end{algorithmic}
\vspace{0.5em}
\hrule
\vspace{1em}

\section{}

\begin{table*}[t!]
\caption{Campaign validation results (extension of Table \ref{tab:scenarios}). The "Cost saving" and "Saving per kWh" percentage are intended for the Opt model compared to FIFS total cost, while the average number of EVs per day is equal for both models in the same configuration.}
\label{tab:scenarios_big}
\centering
\scriptsize
\setlength{\tabcolsep}{2pt}
\renewcommand{\arraystretch}{0.92}
\resizebox{\textwidth}{!}{%
\begin{tabular}{l*{9}{cc}}
\toprule
& \multicolumn{2}{c}{Jun--Off}
& \multicolumn{2}{c}{Jun--Cit}
& \multicolumn{2}{c}{Jun--Lei}
& \multicolumn{2}{c}{Dec--Off}
& \multicolumn{2}{c}{Dec--Cit}
& \multicolumn{2}{c}{Dec--Lei}
& \multicolumn{2}{c}{Vie--Off}
& \multicolumn{2}{c}{Vie--Cit}
& \multicolumn{2}{c}{Vie--Lei} \\
\cmidrule(lr){2-3}
\cmidrule(lr){4-5}
\cmidrule(lr){6-7}
\cmidrule(lr){8-9}
\cmidrule(lr){10-11}
\cmidrule(lr){12-13}
\cmidrule(lr){14-15}
\cmidrule(lr){16-17}
\cmidrule(lr){18-19}
Quantity
& Opt & FIFS
& Opt & FIFS
& Opt & FIFS
& Opt & FIFS
& Opt & FIFS
& Opt & FIFS
& Opt & FIFS
& Opt & FIFS
& Opt & FIFS \\
\toprule
Cost saving [\%] & 18.68 & ... & 10.67 & ... & 12.25 & ... & 19.04 & ... & 9.16 & ... & 10.36 & ... & 29.84 & ... & 18.18 & ... & 21.42 & ... \\
Saving per kWh [\%] & 9.27 & ... & 4.65 & ... & 5.81 & ... & 9.46 & ... & 3.82 & ... & 4.07 & ... & 17.47 & ... & 13.67 & ... & 13.95 & ... \\
Price per kWh [\euro/kWh] & 0.0675 & 0.0744 & 0.0963 & 0.1010 & 0.0779 & 0.0827 & 0.0670 & 0.0740 & 0.0931 & 0.0968 & 0.0826 & 0.0861 & 0.0548 & 0.0664 & 0.0796 & 0.0922 & 0.0765 & 0.0889 \\
Total Cost [\euro] & 902.01 & 1109.48 & 2795.42 & 3129.86 & 1311.67 & 1495.59 & 893.15 & 1103.93 & 2711.98 & 2986.30 & 1402.08 & 1565.72 & 692.79 & 988.43 & 2327.97 & 2845.91 & 1267.53 & 1613.84 \\
Total energy grid [MWh] & 7.68 & 9.21 & 23.32 & 25.29 & 11.17 & 12.37 & 7.63 & 9.23 & 23.44 & 25.16 & 11.29 & 12.47 & 6.95 & 9.18 & 23.56 & 25.17 & 10.91 & 12.44 \\
\midrule
EVs per day & 902.6 & ... & 1857.8 & ... & 1097.1 & ... & 906.0 & ... & 1858.3 & ... & 1100.6 & ... & 900.6 & ... & 1854.9 & ... & 1097.5 & ... \\
Relative delta class [\%] & 52.02 & 56.27 & 53.76 & 56.76 & 53.28 & 56.20 & 51.83 & 56.04 & 54.07 & 56.65 & 53.45 & 56.26 & 50.45 & 56.21 & 53.79 & 56.62 & 52.54 & 56.32 \\
Fully charged EVs [\%] & 21.64 & 22.25 & 22.06 & 22.53 & 21.90 & 22.27 & 21.83 & 22.06 & 22.24 & 22.52 & 22.00 & 22.40 & 21.40 & 22.35 & 22.00 & 22.43 & 21.69 & 22.33 \\
2/3 charged EVs [\%] & 62.47 & 65.00 & 63.82 & 65.39 & 63.56 & 64.85 & 61.94 & 64.77 & 64.20 & 65.24 & 63.54 & 64.99 & 61.09 & 65.00 & 63.67 & 65.22 & 62.64 & 65.19 \\
Waiting time steps & 3.49 & 1.27 & 1.98 & 0.80 & 2.31 & 0.11 & 3.87 & 0.14 & 2.02 & 0.00 & 2.32 & 0.11 & 4.92 & 0.14 & 1.31 & 0.00 & 2.43 & 0.11 \\
\midrule
Total energy deliv. [MWh] & 13.36 & 14.90 & 29.01 & 30.97 & 16.84 & 18.08 & 13.31 & 14.91 & 29.13 & 30.86 & 16.97 & 18.17 & 12.63 & 14.86 & 29.23 & 30.85 & 16.57 & 18.14 \\
PV+Storage used [MWh] & 5.68 & 5.69 & 5.68 & 5.68 & 5.67 & 5.70 & 5.68 & 5.68 & 5.70 & 5.70 & 5.67 & 5.71 & 5.68 & 5.68 & 5.67 & 5.68 & 5.67 & 5.71 \\
Max $P_{\text{ntw}}$ reached [MW] & 1.96 & 2.20 & 2.05 & 2.06 & 2.20 & 2.19 & 1.97 & 2.20 & 2.08 & 2.04 & 2.19 & 2.19 & 2.05 & 2.20 & 2.19 & 2.05 & 2.17 & 2.19 \\
Total energy exch. [MWh] & 2.34 & 0.18 & 4.98 & 0.01 & 2.79 & 0.18 & 2.98 & 0.17 & 5.45 & 0.01 & 2.55 & 0.17 & 2.50 & 0.19 & 5.59 & 0.01 & 2.33 & 0.17 \\
Max energy exch. [kWh] & 87.90 & 87.90 & 91.67 & 0.23 & 84.31 & 20.41 & 91.39 & 14.31 & 91.67 & 0.10 & 91.67 & 19.64 & 78.74 & 15.96 & 91.61 & 0.19 & 83.11 & 20.57 \\
\bottomrule
\end{tabular}
}
\end{table*}

This appendix provides additional information on the validation simulation campaign and its results. All simulations were run in MATLAB R2023b on a Windows 13th Gen Intel(R) Core(TM) i9-13900H (2.60 GHz) and 32 GB of RAM, using CVX with the Mosek solver \cite{mosek}. One full shrinking horizon day of simulation took around 5-6 minutes to run, with a computational time lowering at every step $\tau$ of the optimization (see model \eqref{eq:mainLPmodel}), with the $T$-length simulation horizon shrinking progressively. Figure \ref{fig:time_shrinking} illustrates how runtime decreases as $\tau$ increases---equivalently, as the length  of the shrinking horizon decreases---using $\Delta = 5/60$ hours and $n=30$ as assumed. It should also be noted that computational time depends on the specific software, solver, and computer used for the simulation.

    \begin{figure}[h!] 
    \centering
    \includegraphics[scale=0.43]{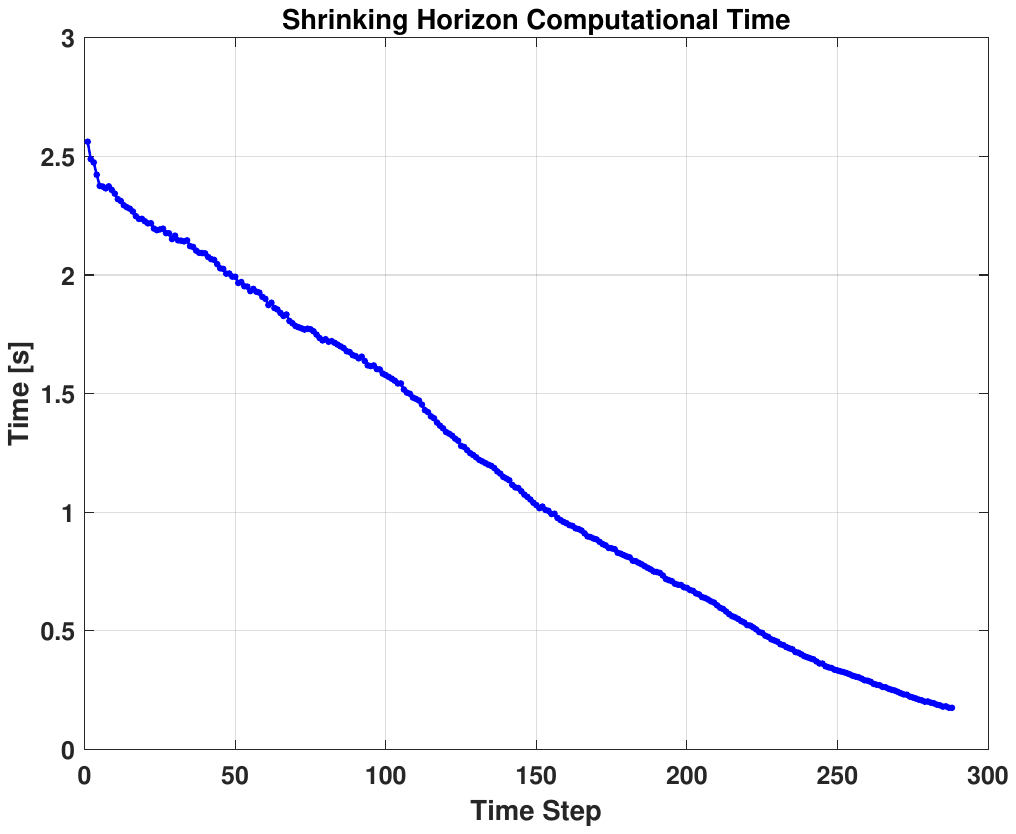} 
    \caption{Average running time (across 100 simulations) of a single optimization run at time step $\tau$ with the shrinking horizon.}
    \label{fig:time_shrinking}
    \end{figure}
    
The results of the nine-configurations validation campaign are summarized in Table \ref{tab:scenarios_big} in which we show, in addition to the average values of the metrics discussed in Section \ref{sec:validation_campaign_results}, average results of others evaluation metrics, highlighting again the economic improvement obtained by our proposed optimization framework compared to FIFS. The reported results are obtained by averaging the performance across all scenarios. Because the class-dependent departure process interacts with the charging policy through the state evolution, the total energy delivered over a day is not exactly the same under the two controllers; for this reason we report both absolute cost and cost per delivered kWh. All of the energy and power quantities were computed in kWh and kW in our simulations; in Table~\ref{tab:scenarios_big} we report some of them in MWh and MW for editing reasons. We observe that, in every configuration, Opt purchases less energy than FIFS, while still delivering a good charging service. This indicates that the proposed strategy is not merely shifting cost between components, but is actually improving the overall use of the available energy resources.

\newpage
\bibliographystyle{cas-model2-names}

\bibliography{bibliography1}

\end{document}